\newcommand{\eps}{\varepsilon}
\newcommand{\bR}{\mathbb{R}}
\newtheorem{theorem}{Theorem}[section]
\newtheorem{lemma}[theorem]{Lemma}
\numberwithin{equation}{section}
\begin{document}
\title[On the Sign Distributions of Hilbert Space Frames]{On the Sign Distributions of Hilbert Space Frames}
\author[Nikolai Nikolski, Alexander Volberg]{Nikolai Nikolski, Alexander Volberg}
\thanks{N. Nikolski is partially supported from RNF grant 14-41-00010 and the Chebyshev Lab, SPb University}
\thanks{A. Volberg is partially supported by the NSF DMS-1600065.  }
\address{Institut de Math\'ematiques de Bordeaux, 
Universit\'e de Bordeaux, Talence, France,
-\, 
and - Chebyshev Laboratory, StPetersburg University}
\email{nikolski@math.u-bordeaux.fr \textrm{(N.\ Nikolski)}}


\address{Department of Mathematics, Michigan State University, East Lansing, MI 48823, USA}
\email{volberg@math.msu.edu \textrm{(A.\ Volberg)}}
\makeatletter
\@namedef{subjclassname@2010}{
  \textup{2010} Mathematics Subject Classification}
\makeatother
\subjclass[2010]{42B20, 42B35, 47A30}
\keywords{}
\begin{abstract} 
We show that the positive and negative 
parts $ u_{k}^{\pm }$ of any frame in a real $ L^{2}$ space with respect 
to a continuous measure have both ``infinite $ l^{2}$ masses": 1) always, 
$ \sum _{k}u_{k}^{\pm }(x)^{2}=\infty $ almost everywhere (in 
particular, there exist no positive frames, nor Riesz bases), but 2) 
$ \sum _{k=1}^{n}(u_{k}^{+}(x)-u_{k}^{-}(x))^{2}$ can grow ``locally" 
as slow as we wish (for $ n\longrightarrow \infty $), and 3) it can 
happen that $ \sum _{k=1}^{n}u_{k}^{-}(x)^{2}=\, o(\sum _{k=1}^{n}u_{k}^{+}(x)^{2})$, 
and vice versa, as $ n\longrightarrow \infty $ on a set of positive 
measure. Property 1) for the case of an orthonormal basis in $ L^{2}(0,1)$ 
was settled earlier (V. Ya. Kozlov, 1948) using completely different 
(and more involved) arguments. Our elementary treatment includes also 
the case of unconditional bases in a variety of Banach spaces. For 
property 2), we show that, moreover, whatever is a monotone sequence 
$ \epsilon _{k}>0$ satisfying $ \sum _{k}\epsilon ^{2}_{k}=\, \infty 
$ there exists an orthonormal basis $ (u_{k})_{k\, }$in $ L^{2}$ 
such that $ \vert u_{k}(x)\vert \leq \, A(x)\epsilon _{k}$, $ 0<A(x)<\, 
\infty $.
\end{abstract}

\maketitle

\section{The subject. An introduction}

\, \, \rm Let $ (\Omega ,\mu )$ be a measure space ($ \mu $ 
is not a finite sum of atoms), $ L^{2}_{{\Bbb R}}(\Omega ,\mu )$ be
Lebesgue space of real valued functions and $ (u_{k})_{k\geq 1}$ a 
\it frame \rm in $ L^{2}_{{\Bbb R}}(\Omega ,\mu )$. Recall that this 
means that the selfadjoint operator $ S$ (the frame operator),\, 
$$
 Sf= \sum _{k\geq 1}(f,u_{k})u_{k},
$$
\rm is an isomorphism on $ L^{2}_{{\Bbb R}}(\Omega ,\mu )$: there exist 
$ A>0,\, B>0$ such that $ A\cdot I\leq \, S\leq \, B\cdot 
I$, that is\, 
$$
A\Big \Vert f\Big \Vert ^{2}\leq \, \displaystyle \sum _{k\geq 
1}\Big \vert (f,u_{k})\Big \vert ^{2}\leq \, B\Big \Vert f\Big \Vert ^{2}\quad \forall f\in L^{2}_{{\Bbb R}}(\Omega ,\mu )\,.
$$
\, 
\rm The right hand ``half" of this condition is called the ``Bessel sequence 
property"; its dual (equivalent) form is $ \Big \Vert \displaystyle \sum _{k\geq 
1}c_{k}u_{k}\Big \Vert ^{2}\leq $ $ B\displaystyle \sum _{k\geq 1}\Big \vert c_{k}\Big \vert 
^{2}$ for every $ c=\, (c_{k})_{k\geq 1}\in l^{2}$ (look on the 
adjoint $ T^{*}$ to $ Tf=\, ((f,u_{k}))_{k\geq 1}$). Every Riesz 
basis (i.e., an isomorphic image of an orthonormal basis) is a bounded 
frame, and conversely, following the famous Marcus- Spielman- Srivastava 
theorem [MSS2015], every bounded frame is a finite union of Riesz basis 
sequences (i.e., Riesz bases in their closed span).\, 

Below, we consider the question on how can be distributed 
the signs $ sign(u_{k}(x))$ of a frame for $ k=\, 1,2,...$. For 
the case of orthonormal bases $ (u_{k})_{k\geq 1}$ the question was 
raised in [Koz1948]. Kozlov's result is as follows:\, 

\medskip

\, \it Let $ (u_{k})_{k\geq 1}$ be an orthonormal basis in $ L^{2}_{{\Bbb R}}(0,1;dx)$ 
and $ u_{k}^{\pm }(x)=\, max(0,\pm u_{k}(x))$, $ x\in (0,1)$ positive 
and negative parts of $ u_{k}$, respectively. Then $ \sum _{k}u_{k}^{+}(x)^{2}=\, 
\sum _{k}u_{k}^{-}(x)^{2}=\, \infty $ almost everywhere.\, 

\medskip

\rm Kozlov's proof is quite involved and is based on topological properties 
of  Lebesgue measure $ dx$ on $ (0,1)$. In [Koz1948], there are 
also some applications to uniqueness/divergence of Fourier series of 
$ L^{2}$ functions with respect to general orthogonal bases. Later 
on, the same questions were discussed in [Aru1966], [Ovs1980]. We are 
also informed (thanks to D. Yakubovich, University Autonoma de Madrid) 
that the non-existence of positive Riesz bases was requested in the 
perceptive fields theory developed by V. D. Glezer and others, see for 
example [Gle2016]. After this paper appeared in arXiv (1812.06313 in math.FA), Prof. 
A.M.Powell kindly informed us on two more papers \cite{JS2015}  and \cite{PS2016}
where the question on positive bases in $ L_{{\Bbb R}}^{p}(0,1)$ is also 
considered, see comments in 1.2(5) below.\,

\subsection{Results}
\label{res}
We give (simple) proofs to the following theorems.\, 
\begin{theorem}
\label{1.1}
Let $ \mu $ be a continuous measure (i.e., without 
point masses) and $ (u_{k})_{k\geq 1}$ a frame in $ L^{2}_{{\Bbb R}}(\Omega 
,\mu )$. Then\, 
$$
 \sum _{k}(u_{k}^{+}(x))^{2}= \sum _{k}(u_{k}^{-}(x))^{2}=
 \infty, \quad \mu -a.e.
$$
 In particular, there exists no positive frames (nor Riesz bases).\, 
\end{theorem}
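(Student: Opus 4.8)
The plan is to prove both equalities at once by reducing everything to the non-existence of a \emph{positive frame}. Since $(-u_k)_k$ is again a frame and $(-u_k)^{\pm}=u_k^{\mp}$, it suffices to establish $\sum_k(u_k^-(x))^2=\infty$ $\mu$-a.e.; applying this to $(-u_k)_k$ gives the statement for $u_k^+$, and the ``in particular'' clause is then immediate. So I would first prove the \emph{Lemma: if $\mu$ is continuous with $\mu(\Omega)>0$, there is no frame $(v_k)_k$ in $L^2_{\mathbb R}(\Omega,\mu)$ with all $v_k\ge 0$.} Everything after that is bookkeeping.

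To prove the Lemma I would reduce to $0<\mu(\Omega)<\infty$ (restricting every $v_k$ to a subset of finite positive measure leaves a positive frame with the same bounds $A,B$), and then exploit that nonnegativity makes the frame operator $Sf=\sum_k(f,v_k)v_k$ an integral operator with a positive $L^1$ kernel. Indeed $K(x,y):=\sum_k v_k(x)v_k(y)\ge 0$ and, by Tonelli, $\iint K\,d\mu\,d\mu=\sum_k\big(\int v_k\,d\mu\big)^2=\sum_k|(\mathbf 1_\Omega,v_k)|^2\le B\,\mu(\Omega)<\infty$, so $K\in L^1(\mu\otimes\mu)$; this is precisely where $v_k\ge 0$ is used, since for signed $v_k$ one only controls $\sum_k(\int|v_k|)^2$, which may diverge. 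Using non-atomicity, choose an orthonormal sequence $r_1,r_2,\dots$ in $L^2_{\mathbb R}(\Omega,\mu)$ that is uniformly bounded in $L^\infty$ — e.g. the Rademacher functions obtained by repeated halving of $\Omega$, for which $|r_n|$ is a fixed constant. Interchanging the (absolutely convergent) sum and the integrals,
\[
 (Sr_n,r_n)=\sum_k(r_n,v_k)^2=\iint K(x,y)\,r_n(x)r_n(y)\,d\mu(x)\,d\mu(y).
\]
Since $(r_n)$ is orthonormal it tends weakly to $0$ in $L^2(\mu)$, so $(g,r_n)(h,r_n)\to 0$ for all $g,h\in L^2(\mu)$; because $\{g\otimes h\}$ spans a dense subspace of $L^1(\mu\otimes\mu)$ and $\sup_n\|r_n\otimes r_n\|_{L^\infty}<\infty$, a three-$\epsilon$ argument gives $(Sr_n,r_n)\to 0$. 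But the lower frame inequality forces $(Sr_n,r_n)=\sum_k(r_n,v_k)^2\ge A\|r_n\|^2=A>0$, a contradiction.

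Granting the Lemma, I would deduce the theorem by localisation. If $\sum_k(u_k^-(x))^2<\infty$ on a set of positive measure, there are $M<\infty$ and $F$ with $\mu(F)>0$ and $\sum_k(u_k^-(x))^2\le M$ on $F$; by continuity of $\mu$ choose $F'\subseteq F$ with $0<\mu(F')<A/(4M)$ and set $v_k:=u_k^+\mathbf 1_{F'}\ge 0$. For $h\in L^2_{\mathbb R}(\Omega,\mu)$ supported in $F'$ we have $(h,v_k)=(h,u_k)+\int_{F'}h\,u_k^-\,d\mu$ (because $u_k^+=u_k+u_k^-$), where $\sum_k|(h,u_k)|^2\in[A\|h\|^2,B\|h\|^2]$ by the frame property of $(u_k)$, while $\sum_k\big(\int_{F'}h\,u_k^-\,d\mu\big)^2\le\|h\|^2\int_{F'}\sum_k(u_k^-)^2\,d\mu\le M\mu(F')\|h\|^2<\tfrac14A\|h\|^2$. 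Hence $\tfrac14A\|h\|^2\le\sum_k|(h,v_k)|^2\le(\sqrt B+\tfrac12\sqrt A)^2\|h\|^2$, so $(v_k)_k$ is a frame of nonnegative functions for $L^2_{\mathbb R}(F',\mu|_{F'})$, contradicting the Lemma. Therefore $\sum_k(u_k^-(x))^2=\infty$ a.e., and the theorem follows.

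The step I expect to be the real obstacle is the Lemma: the realisation that nonnegativity is exactly what makes $K$ integrable on $\Omega\times\Omega$, followed by the verification that $r_n\otimes r_n\to 0$ in the $\sigma(L^\infty,L^1)$ sense — which rests on the uniform $L^\infty$ bound for $(r_n)$, itself a consequence of non-atomicity. The frame bookkeeping in the localisation step (that $(u_k^+\mathbf 1_{F'})$ really is a frame, both bounds, once $\mu(F')$ is small) is routine.
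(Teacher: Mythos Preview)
Your proof is correct, and it follows a genuinely different route from the paper's.

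The paper does \emph{not} reduce the theorem to the non-existence of a positive frame. Instead, assuming $\sum_k(u_k^-(x))^2\le M^2$ on a set $E$ of positive finite measure, it proves directly that the embedding $L^2(E,\mu)\hookrightarrow L^1(E,\mu)$ is compact. The mechanism is: the hypothesis makes $V:f\mapsto((f,u_k^-))_k$ compact $L^2(E)\to\ell^2$ (Hilbert--Schmidt), hence several cross terms in the decomposition $S=U+W+X$ of the frame operator are compact on $L^2(E)$; this forces the compression $X_Ef=\sum_k(f,u_k^+)u_k^+|_E$ to be Fredholm on $L^2(E)$, and a separate estimate shows $X_E:L^2(E)\to L^1(E)$ is compact. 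Composing a Fredholm regularizer with this gives the contradiction.

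Your argument replaces the Fredholm step entirely. You first isolate the lemma ``no positive frame'' via the observation that positivity makes the kernel $K=\sum_k v_k\otimes v_k$ lie in $L^1(\mu\otimes\mu)$, so that $(Sr_n,r_n)=\langle K,r_n\otimes r_n\rangle\to 0$ by weak-$*$ convergence of the bounded sequence $r_n\otimes r_n$, contradicting the lower frame bound. Then your localisation (shrinking $F'$ until $M\mu(F')<A/4$) turns the restricted $u_k^+$ into a positive frame on $F'$ by a clean $\ell^2$ triangle inequality. This is more elementary and more transparent in the Hilbert setting: it uses only the $L^1$-kernel/Rademacher trick and avoids operator-theoretic regularisers. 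What the paper's approach buys in exchange is portability: its compactness-of-tails scheme (Section~2.1, Theorem~2.6) carries over verbatim to unconditional bases in reflexive Banach lattices $X$ with $L^\infty\subset X^*\subset L^1$, whereas your kernel-in-$L^1$ argument is tied to the frame identity $(Sf,f)=\sum_k(f,v_k)^2$ and the $L^2$ inner-product structure.
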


\, \, \rm Theorem 1.1 is sharp in several senses: 1) first, 
one \it cannot weaken the frame condition \rm of Theorem 1.1 up to 
``complete Bessel system" condition; 2) secondly, the signs of $ u_{k}(x)$ 
are \it not ``equidistributed" on subsequences \rm of $ (u_{k})$ even 
for orthonormal bases; and 3) third, for sequence spaces $ l$ strictly 
larger than $ l^{2}$, the sequences $ (u_{k}(x))_{k\geq 1}$ can be 
in $ l$ for every $ x\in \Omega $. Precisely, the following facts hold.\, 
\, 
\begin{theorem}
\label{1.2}
 \it Let $ (\Omega ,\mu )$ be a measure space, $ \mu $ 
a continuous measure.\, 

I. There exists a sequence $ (v_{n})_{n\geq 1}$ in $ L^{2}_{{\Bbb R}}(\Omega 
,\mu )$ such that\, 
(1) $ v_{n}\geq 0$ on $ \Omega $,\, 
(2) $ \sum _{n}v_{n}(x)^{2}=\, \infty $ on $ \Omega $,\, 
(3) $ 0<\, \sum _{n}\vert (f,v_{n})\vert ^{2}\leq $ $ B\Vert f\Vert 
^{2},$ $ \forall f\in L^{2}_{{\Bbb R}}(\Omega ,\mu )$, $ f\not= 0$ 
(i.e., $ (v_{n})_{k\geq 1}$ is a complete Bessel sequence).\, 

II. There exists a subset $ E\subset \, \Omega $, $ 0<\mu 
E<\, \infty $, and an orthonormal basis $ (u_{k})_{k\geq 1}$ in 
$ L^{2}_{{\Bbb R}}(\Omega ,\mu )$ such that $ v_{n}:=\, u_{2n}\vert 
E$, $ n=\, 1,2,...$, satisfy conditions (1)-(3) of I \textup($ \Omega $ 
is replaced by $ E$\textup).\, 
\end{theorem}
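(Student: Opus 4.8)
The plan is to deduce both parts from one dyadic construction. Since the statement of part~II presupposes a countable orthonormal basis, $L^2_{\bR}(\Omega,\mu)$ is separable; as $\mu$ is continuous, fix a set $E\subset\Omega$ with $0<\mu E<\infty$ which is measure isomorphic, modulo null sets, to $((0,1),dx)$, and identify $L^2_{\bR}(E,\mu)=L^2_{\bR}(0,1)$. Enumerate the proper dyadic subintervals of $(0,1)$ as $I_1,I_2,\dots$, write $|I_n|=2^{-j_n}$ with $j_n\ge1$, put $\phi_n=|I_n|^{-1/2}\mathbf 1_{I_n}\ge0$, and set $v_n=c_{j_n}\phi_n$ where $c_j=\sqrt{\eps}/j$ for a small $\eps>0$. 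Then $v_n\ge0$ and, for every $x$,
$$
\sum_n v_n(x)^2=\sum_n\frac{c_{j_n}^2}{|I_n|}\,\mathbf 1_{I_n}(x)=\sum_{j\ge1}c_j^2\,2^{j}=\eps\sum_{j\ge1}\frac{2^{j}}{j^{2}}=\infty,
$$
whereas, estimating $\big(\int_I g\big)^2\le|I|\int_I g^2$ scale by scale,
$$
\sum_n|(g,v_n)|^2=\sum_{j\ge1}c_j^{2}\sum_{|I|=2^{-j}}\frac{1}{|I|}\Big(\int_I g\Big)^2\le\Big(\sum_{j\ge1}c_j^2\Big)\|g\|^2=\frac{\eps\pi^2}{6}\,\|g\|^2 .
$$
Hence $(v_n)$ is a Bessel sequence, and it is complete: $(g,v_n)=0$ for all $n$ forces $\int_I g=0$ on every dyadic $I$, so $g=0$ a.e.\ by Lebesgue differentiation. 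This is part~I on $E$, and on all of $\Omega$ by running the same construction on a countable partition of $\Omega$ into cells of finite measure (the Bessel bound survives, the cell systems having pairwise disjoint supports).

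For part~II, note first that the \emph{upper} Bessel bound is automatic: for any orthonormal basis $(u_k)$ of $L^2_{\bR}(\Omega,\mu)$ one has $\sum_k|(g,u_k|E)_{L^2(E)}|^2=\sum_k|(g\mathbf 1_E,u_k)|^2=\|g\|_{L^2(E)}^2$, hence also $\sum_n|(g,u_{2n}|E)|^2\le\|g\|_{L^2(E)}^2$. So it suffices to produce an orthonormal basis whose even members restrict to $E$ exactly as the system $(v_n)$ of part~I — then $(1)$, $(2)$ hold, and the lower bound in $(3)$ is dyadic completeness again. To this end, split $\Omega\setminus E=F_1\sqcup F_2$ into sets of positive measure and put $H_2=L^2_{\bR}(F_1)$. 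Let $G_1=\big((v_n,v_m)\big)_{n,m}$ be the (real, symmetric, positive) Gram operator of $(v_n)$ on $\ell^2$; by the estimate above $\|G_1\|\le\eps\pi^2/6<1$ for small $\eps$, so $G:=I-G_1$ is bounded, positive and invertible. Identify $\ell^2$ with $H_2$ through an orthonormal basis $(e_n)$ of $H_2$ and set $\xi_n:=G^{1/2}e_n\in H_2$; then $(\xi_n,\xi_m)=(Ge_n,e_m)=G_{nm}$, so $y_n:=v_n\oplus\xi_n$ in $L^2_{\bR}(E)\oplus L^2_{\bR}(F_1)\subset L^2_{\bR}(\Omega,\mu)$ has Gram matrix $G_1+G=I$ and therefore $(y_n)$ is orthonormal. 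Put $u_{2n}:=y_n$, so $u_{2n}|E=v_n$. Every $y_n$ is supported in $E\cup F_1$, so $\mathcal Y:=\overline{\spn}\{y_n\}$ is orthogonal to the infinite-dimensional space $L^2_{\bR}(F_2)\subset\mathcal Y^\perp$; choosing $(u_{2n-1})_{n\ge1}$ to be any orthonormal basis of $\mathcal Y^\perp$, the family $(u_k)_{k\ge1}$ is an orthonormal basis of $L^2_{\bR}(\Omega,\mu)$ with $u_{2n}|E=v_n$, which is part~II.

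The routine ingredients are the two elementary estimates above and the remark that a subsequence of a restricted orthonormal basis is automatically Bessel with bound $1$. The delicate point — the reason the construction is not wholly transparent — is the dilation step in part~II: one enlarges $(v_n)$ to an orthonormal system by borrowing mass on $F_1$ via $G^{1/2}=(I-G_1)^{1/2}$, which is legitimate only when the Bessel bound of $(v_n)$ is \emph{strictly} less than $1$ (this is exactly why the damping factor $1/j$, rather than $1$, is built into $v_n$: the normalized indicators $|I|^{-1/2}\mathbf 1_I$ alone are not even a Bessel sequence), while at the same time one must leave a genuinely infinite-dimensional reservoir $L^2_{\bR}(F_2)$ untouched so that the infinitely many odd-indexed basis vectors have somewhere to live.
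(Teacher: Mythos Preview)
Your proof is correct and follows essentially the same two-step architecture as the paper: a dyadic indicator system on $(0,1)$ for Part~I, and a defect-operator dilation for Part~II (your Gram-matrix formulation $G^{1/2}=(I-G_1)^{1/2}$ is exactly the paper's $D_T=(I-T^*T)^{1/2}$, with $T\delta_n=v_n$).

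The one noteworthy difference is in Part~I. The paper takes the \emph{unnormalized} indicators $\chi_I$ themselves and obtains the Bessel bound by invoking the dyadic Carleson embedding theorem with weights $w_I=|I|^2$; you instead take \emph{normalized} indicators damped by $c_j=\sqrt{\eps}/j$ and get the Bessel bound by a one-line Cauchy--Schwarz at each scale. Your route is more self-contained (no appeal to an external embedding criterion) and has the side benefit that the Bessel constant $\eps\pi^2/6<1$ is explicit from the start, whereas the paper must rescale afterward to get $B<1$ before dilating. Your extra care in Part~II---reserving a piece $F_2\subset\Omega\setminus E$ so that $\mathcal Y^\perp\supset L^2(F_2)$ is visibly infinite-dimensional---makes explicit a point the paper leaves to the reader.
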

\begin{theorem}
\label{1.3}
\it Let $ \{b_{n}\},\, b_{n}>0,\, \lim_{n}b_{n}=\, 
\infty $, be a monotone sequence such that
$$
\lim_{n}\frac{ b_{n}}{b_{n-1}}
=\, 1,
$$
 and 
 $$
   \sum \frac{1}{b_{n}}
=\, \infty .
$$
\it Then there exists a weight $ w(x)>0$ on the real line $ {\Bbb R}$ 
such that the orthonormal polynomials $ p_{n}$, $ n=0,1,...$, form 
a basis in $ L^{2}({\Bbb R},wdx)$ and
$$
\Big \vert p_{n}(x)\Big \vert ^{2}\leq \, \frac{\displaystyle C(x)}{\displaystyle 
b_{n}}
$$
\it where $ C(x)>0$ is locally bounded on $ {\Bbb R}$. Notice that 
$ \Big \vert p_{n}(x)\Big \vert ^{2}=\, (p_{n}^{+}(x)\pm p_{n}^{-}(x))^{2}=\, 
p_{n}^{+}(x)^{2}+\, p_{n}^{-}(x)^{2}$.\, 
\end{theorem}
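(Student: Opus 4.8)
The plan is to reduce the construction of the weight $w$ to a known theorem on orthogonal polynomials that controls the sup-norms of the $p_n$, and then to arrange the decay $|p_n(x)|^2 \leq C(x)/b_n$ by a judicious choice of the polynomial density. The natural tool is a weight for which the orthonormal polynomials are pointwise comparable to the classical Legendre or Chebyshev polynomials after a change of variables, or — more flexibly — the Bernstein--Szeg\H{o} and Máté--Nevai--Totik asymptotics, which on a finite interval give $p_n(x)\sqrt{w(x)} \sim \sqrt{2/\pi}\,\cos(n\arccos x + \gamma(x))/(1-x^2)^{1/4}$ uniformly on compacts of $(-1,1)$ for nice $w$. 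Such asymptotics give only $|p_n(x)|\le C(x)$, i.e. $b_n \equiv 1$, so the boundedness of $p_n$ is free but the extra decay by $b_n$ is not; the role of the hypotheses $b_n/b_{n-1}\to 1$ and $\sum 1/b_n = \infty$ must be to make room for that decay while still keeping $(p_n)$ a basis.

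First I would record the mechanism that produces decay: if $w$ is supported on all of $\mathbb{R}$ and is the restriction to $[-1,1]$ of a fixed nice weight, then adding mass near a point or, better, dilating, rescales $p_n$; but the clean way is to use a \emph{weighted} estimate. Concretely, one seeks $w$ so that the orthonormal polynomials satisfy $|p_n(x)|^2 w_0(x) \le C$ for a reference bounded density $w_0$, and then writes $p_n = (p_n \sqrt{w_0/w})\cdot \sqrt{w/w_0}$. This is circular unless the ratio is itself controlled, so instead the real plan is: choose $w$ on $\mathbb{R}$ of the form $w(x) = W(x)\,e^{-V(x)}$ with $W$ a Bernstein--Szeg\H{o}-type factor giving the Szeg\H{o} asymptotics on each compact interval and $V$ a confining field; the weighted polynomials $p_n(x)\sqrt{w(x)}$ are then $O(1)$ on compacts, hence $|p_n(x)|^2 \le C(x)$ with $C$ locally bounded, which already gives the theorem with $b_n = 1$. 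To get the stated $b_n$, superimpose on $w$ a slowly varying multiplicative perturbation concentrated at scale related to $b_n$: since $\sum 1/b_n = \infty$, the product $\prod(1 - c/b_n)$ diverges to $0$, which is exactly what is needed to push $\|p_n\|_\infty$ down to order $1/\sqrt{b_n}$ geometrically-summed; the condition $b_n/b_{n-1}\to 1$ guarantees the perturbation is slow enough that the basis property (equivalently, the Szeg\H{o} condition $\int \log w\,/(1+x^2)\,dx > -\infty$ and completeness of polynomials) is preserved. I would verify completeness via Riesz's criterion (polynomials are dense in $L^2(w\,dx)$ when $\int \log w /(1+x^2) > -\infty$ fails appropriately — more precisely, when the weight is not too large at infinity) and the basis property from the uniform asymptotics on compacts together with a tail estimate.

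The key steps in order: (1) fix a model weight $w_0$ on $\mathbb{R}$ (e.g. a Freud-type or a finite-interval Szeg\H{o} weight) for which $|p_n^{(0)}(x)|^2 \le C_0(x)$, $C_0$ locally bounded, and polynomials form a basis in $L^2(w_0)$; (2) build a sequence of cutoff or dilation factors $\rho_n$ with $\prod_{k\le n}(1+\delta_k) \asymp b_n$, using $\sum 1/b_n=\infty \Leftrightarrow \prod(1+1/b_n)=\infty$ and monotonicity to get a telescoping product; (3) define $w$ by incorporating these factors so that the $n$-th orthonormal polynomial inherits the extra normalization $1/\sqrt{b_n}$ on compacts; (4) check that $b_n/b_{n-1}\to 1$ makes each perturbation step $o(1)$, so Szeg\H{o}'s condition and completeness survive, and the perturbed $(p_n)$ is still a basis — here one can invoke a perturbation/stability lemma for Riesz bases of polynomials or the Máté--Nevai--Totik comparison theorem; (5) conclude $|p_n(x)|^2 \le C(x)/b_n$ and note the trivial identity $|p_n|^2 = (p_n^+)^2 + (p_n^-)^2$ stated in the theorem.

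The main obstacle is step (4): reconciling the requirement that the weight be perturbed at every index $n$ (so that \emph{every} $p_n$ decays) with the requirement that $(p_n)$ remain a basis. A naive construction that shrinks the weight on a fixed set to force decay of $p_n$ will destroy the Szeg\H{o} condition or completeness; the hypothesis $b_n/b_{n-1}\to 1$ is precisely the amount of slack that lets the cumulative perturbation be large (product diverging) while each incremental perturbation is asymptotically negligible, so the delicate point is to make this quantitative — to exhibit an explicit $w$ and prove the uniform-on-compacts bound $|p_n(x)|\,\sqrt{b_n} \le C(x)^{1/2}$ together with the basis property, rather than merely plausibly arguing it. I expect the cleanest route is to take $w$ on a finite interval $(-1,1)$ (rescaled to $\mathbb{R}$ at the end, or simply stated on $\mathbb R$ with a companion confining weight) of Bernstein--Szeg\H{o} type $w(x) = h(x)/|q_n(x)|^2$ is not allowed since $w$ cannot depend on $n$ — so instead $w(x) = \prod_{j}(\text{slowly shrinking bumps})$ whose product over all $j$ still satisfies $\int_{-1}^1 \log w(x)\,(1-x^2)^{-1/2}dx > -\infty$, which is where $\sum 1/b_n$ enters through $\sum \log(1+1/b_n) \asymp \sum 1/b_n$ balanced against the localization of each bump.
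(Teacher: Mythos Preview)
Your proposal is not a proof but a loose outline, and the central mechanism you sketch does not work. You try to build the weight $w$ first (a Bernstein--Szeg\H{o} or Freud model) and then perturb it by ``slowly shrinking bumps'' indexed by $n$ so that each $p_n$ acquires an extra factor $1/\sqrt{b_n}$. But a weight is a single function of $x$; it cannot carry a different perturbation ``at index $n$'' for each $n$, and you concede this yourself when you rule out $w(x)=h(x)/|q_n(x)|^2$. Your product heuristic $\prod(1+1/b_n)$ never gets linked to any concrete quantity controlling $|p_n(x)|$, and the Szeg\H{o}/M\'at\'e--Nevai--Totik asymptotics you invoke on a compact interval give only $|p_n(x)|\le C(x)$, not the required decay. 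Step~(4), which you flag as the obstacle, is in fact the whole problem, and nothing in the outline resolves it.

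The paper's route is the reverse of yours and avoids the obstacle entirely: rather than prescribe $w$ and study the resulting polynomials, one \emph{prescribes the three-term recurrence} and lets Favard's theorem produce the measure. Take the Jacobi matrix with diagonal $(b_n)$ and off-diagonal $a_n=\tfrac{1}{2B}\sqrt{b_nb_{n-1}}$ for some fixed $0<B<1$. The hypothesis $\sum 1/b_n=\infty$ gives $\sum 1/a_n=\infty$, so Carleman's criterion forces the moment problem to be determinate; the measure is unique, polynomials are dense, and $(p_n)$ is a basis. The pointwise bound comes from a substitution due to M\'at\'e--Nevai and Szwarc: set $A_n(x)=p_n(x)\sqrt{b_n-x}$ and $\Lambda_n=Ba_n/\sqrt{(b_n-x)(b_{n-1}-x)}$; the recurrence becomes $\Lambda_{n+1}A_{n+1}+BA_n+\Lambda_nA_{n-1}=0$ with $\Lambda_n\to\tfrac12$ (here $b_n/b_{n-1}\to1$ is used) and $(\Lambda_n)$ of bounded variation. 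A short computation with the ``Tur\'an-type'' quantity $\Delta_n=A_n^2-A_{n-1}A_{n+1}$ shows $\Delta_n$ converges locally uniformly to a positive function and $A_n^2\le C\Delta_n$, whence $A_n$ is bounded and $|p_n(x)|^2\le C(x)/b_n$. The two hypotheses thus have sharp, separate roles --- Carleman for the basis property, the ratio condition for the M\'at\'e--Nevai argument --- neither of which appears in your scheme.
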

The proof of Theorem 1.3 is given in the 
spirit of the spectral theory of Jacobi matrices, and heavily depends 
on methods developed by A. M\'at\'e and P. Nevai [MaN1983] and R. Szwarc 
[Szw2003], see more references and comments in Section 3 below. \, 

\bigskip

\subsection{Comments}
\label{1.4}
 \bf (1) \rm For measures \it with point masses\rm , 
no analog of Theorem 1.1 can be valid: there exist even orthogonal 
bases of nonnegative functions, for example, the natural basis in $ 
l^{2}=\, L^{2}({\Bbb N},count)$.\,

\, \bf (2) \rm Also, in Theorem 1.1, \it the completeness property 
is essential\rm , i.e. just for Riesz (or even orthonormal) sequences, 
nothing similar is true: the sequences $ (u_{k}^{\pm }(x))_{k\geq 1}$ 
can even have finitely many non-zero coordinates only. Theorem 1.2 
shows that keeping only ``a half of \it frame \rm conditions", namely 
that of complete Bessel systems, we loose the conclusion of 1.1: there 
exist \it positive \rm complete Bessel sequences $ (u_{k})$ for which 
$ \sum _{k}u_{k}(x)^{2}=\, \infty $ a.e.\,

\, \bf (3) \rm Theorem 1.2 implies also a kind of ``non-equidistribution" 
of the signs in the family $ (u_{k})_{k}$ forming a frame (and even 
an orthonormal basis); see comments in Section 4. \,

\, \bf (4) \it The sharpness of Theorem 1.1\rm , as stated in 
Theorem 1.3, implies in particular, that taking $ b_{n}=\, n$ 
we obtain an orthonormal polynomial basis $ (u_{k})_{k}$ in a weighted 
spaces $ L^{2}({\Bbb R},w(x)dx)$, $ w(x)>0$, with the property $ \vert u_{k}(x)\vert 
\leq \, {\frac{c(x)}{k^{1/2}}} $ for every $ x\in {\Bbb R}$, and 
hence\, 
$$
 \sum _{k}\vert u_{k}(x)\vert ^{2+\epsilon }<\, \infty  \forall \epsilon >0\quad \forall x\in {\Bbb R}\,.
$$
\, 
\, \, \rm It is curious that it seems there exist \it no 
classical \rm (or ``semi-classical") orthonormal polynomials which show 
such kind asymptotic behavior. Indeed, in the classical setting, 
the best known estimates are shown by Laguerre orthonormal polynomial 
basis $ L_{k}$, $ k=\, 0,1,...$, in $ L^{2}_{{\Bbb R}}(0,\infty ;e^{-x}dx)$, 
where we have
$$
 L_{k}(x)=\, {\frac{x^{-1/4}e^{x/2}}{\sqrt{\pi } 
k^{1/4}}} Cos(2\sqrt{kx} -\, {\frac{\pi }{4}} )+O(k^{-3/4})\quad x>0\,,
$$
\rm see [Sz1975], p.198), and hence $ \sum _{k}\vert L_{k}(x)\vert ^{4+\epsilon 
}<\, \infty $ ($ \forall \epsilon >0$) almost everywhere, but 
$ (L_{k}(x))_{k\geq 0}\not\in l^{4}$. Similar property holds for Hermite 
normalized polynomials in $ L^{2}_{{\Bbb R}}({\Bbb R};e^{-x^{2}}dx)$.\,

\, \, \bf (5) \rm The theme of the sign distribution of bases 
was developed at least in two other papers, [Aru1966] and [Ovs1980]. 
In [Aru1966], it is proved that for an \it unconditional basis $ (u_{k})_{k\geq 
1}$ \rm in $ L_{{\Bbb R}}^{p}(0,1)$, $ \sum _{k}u_{k}^{\pm }(x)^{p'}=$ 
$ \infty $ a. e. if $ 2\leq p<\infty ,\, {\frac{1}{p'}} +{\frac{1}{p}} 
=\, 1$ (which contains Kozlov's theorem), and $ \sum _{k}u_{k}^{\pm 
}=$ $ \infty $ a.e. if $ 1<p<2$. (We will see in Section 2 that our elementary method entails 
these results and gives more). In [Ovs1980], a stronger 
property is proved under different hypotheses: if a sequence $ (u_{k})\subset 
\, L_{{\Bbb R}}^{2}(0,1)$ is normalized $ \Vert u_{k}\Vert _{2}=\, 
1$, weakly tends to $ 0$ and $ \lim_{n}\int _{E}\vert u_{n}\vert 
dx>\, 0$ for every $ E\subset \, (0,1),\, \vert E\vert >\, 
0$, then $ \sum _{k}u_{k}^{\pm }(x)^{p}=\, $ $ \infty $ a.e. on 
$ (0,1)$, $ \forall p<\, \infty $. Below, we show on a very simple 
example that, there exist positive uniformly minimal complete normalized 
sequences $ (u_{k})\subset $ $ L_{{\Bbb R}}^{2}(0,1)$, $ u_{k}\geq 0$. In \cite{PS2016}, it is shown that there exists neither positive unconditional basis in $ L_{{\Bbb R}}^{p}(0,1)$, $ 1\leq p<\infty $ (already known from 
(Aru1966)), nor positive quasibasis; there are however positive Markushevich bases (minimal complete sequences having complete biorthogonal). In \cite{JS2015} a positive Schauder basis in $ L_{{\Bbb R}}^{1}(0,1)$ is 
 constructed.\, 

\bigskip

 The rest of the paper is as follows: {\S}2 - proof 
of theorem 1.1 and unconditional bases in Banach spaces, {\S}3 - proof 
of theorem 1.3, and possible nonsymmetry between $ u^{\pm }_{k}$, {\S}4 
- proof of theorem 1.2.\,

\bigskip

\, \, \bf Acknowledgements. \rm The first author is highly 
grateful to Sasha and Olga Volberg, as well as to the Math Department 
of the MSU, organizing his short visit to Lansing-Ann Arbor (Fall 2018) 
with remarkable working conditions. He also recognizes a support from 
RNF grant 14-41-00010 and the Chebyshev Lab, SPb University.\, 
The second author is supported by NSF grant DMS 1600065. 
Both authors are grateful to Alexander Powell who indicated to them the papers \cite{JS2015} and \cite{PS2016}.\, 
\, 
\section{Proof of Theorem 1.1, and signs of unconditional 
bases}
\, 
\, \, \rm We start with a simplest version of our principal 
observation.\, 
\, 
\subsection{There exist no nonnegative Riesz bases in $ L^{2}$}
\label{2.1}

This result is not new, see \cite{Aru1966}, \cite{PS2016}. However, seems that our proof is somewhat simpler.
\begin{proof}
\, \, \, \rm Indeed, let $ L^{2}=\, L^{2}_{{\Bbb R}}(\Omega 
,\mu )$, $ \mu $ continuous, $ \mu \Omega <\, \infty $, and assume 
that $ (u_{k})$ is\, 
$$
 \text{a normalized unconditional (= Riesz) basis having }
 u_{k}\geq 0 \,\,\text{on} \,\, \Omega 
$$
\rm and $ f\in L^{2}_{{\Bbb R}}(\Omega ,\mu )$. Using 
the development $ f=\, \sum _{k\geq 1}(f,u'_{k})u_{k}$ (where 
$ (u'_{k})$ stands for the dual sequence, $ (u_{k},u'_{j})= \delta 
_{kj}$), define $ R_{N}f=\, $ $ \sum _{k\geq N}(f,u'_{k})u_{k}$ 
and observe that\, 
$$
 \Big \Vert R_{N}f\Big \Vert _{L^{1}}\leq \, \displaystyle \int 
_{\Omega }\displaystyle \sum _{k\geq N}\Big \vert (f,u'_{k})\Big \vert u_{k}=\, 
\displaystyle \sum _{k\geq N}\Big \vert (f,u'_{k})\Big \vert (u_{k},1)_{L^{2}}=
$$
$$
 (f_{*},R_{N}^{*}1)_{L^{2}}, \rm\text{where}\,\,  f_{*}=\, 
\displaystyle \sum _{k\geq 1}\Big \vert (f,u'_{k})\Big \vert u_{k}\,.
$$
\rm Since $ \Vert f_{*}\Vert _{2}\leq \, B\Vert f\Vert _{2}$, 
it means $ \Big \Vert R_{N}:L^{2}\longrightarrow L^{1}\Big \Vert _{L^{1}}\leq 
\, B\Vert R^{*}_{N}1\Vert _{2}$. But $ \lim_{N}\Vert R^{*}_{N}1\Vert 
_{2}=\, 0$, and the map $ S_{N}f=\, f-R_{N}f$ has a finite 
rank, so we get that $ id:L^{2}\longrightarrow L^{1}$ is compact, which 
is not the case (for example, if $ \mu \Omega =\, 1$, there exists 
a unimodular orthonormal sequences in $ L^{2}$). 
\end{proof} 

\bigskip

\subsection{Remarks on other spaces}
\label{Lp}

\
\bigskip

Let $ L^{p}= L^{p}_{{\Bbb R}}(\Omega 
,\mu )$\rm , $ \mu $ continuous, $ \mu \Omega <$ $ \infty $.

 \bf (1) 
\rm Exactly the same lines (with $ \Vert f_{*}\Vert _{2}$ replaced 
by $ \Vert f_{*}\Vert _{X}$, and $ \Vert R^{*}_{N}1\Vert _{2}$ by $ 
\Vert R^{*}_{N}1\Vert _{X^{*}}$) show that\, 
\, 
\, \it there is no nonnegative unconditional bases in any reflexive 
Banach space $ X$ of measurable functions\, 
\, 
\rm such that $ L^{\infty }(\mu )\subset \, X^{*}\subset \, L^{1}(\mu 
)$, $ X^{*}$ stands for the dual space with respect to the duality 
$ (f,h)=\, \int _{\Omega }f\overline{h}d\mu $.\, 
\, \, \it Example: $ X=\, L^{p}_{{\Bbb R}}(\Omega ,\mu 
)$\rm , $ 1<p<\infty $.\, 
\, \, Later on, we return to $ L^{p}$ spaces in order to 
consider the sign distributions of unconditional bases in more details 
(see point 2.5 below).\,

\, \, \bf (2) \rm One can slightly strengthen property 2.1 
replacing the condition $ u_{k}(x)\geq 0$ a.e. for $ max_{j}h_{j}(x)u_{k}(x)\geq 
0$ a.e. ($ \forall k$) where $ \{h_{j}\}$ stands for a finite family 
of functions taking values $ \pm 1$.\,

 Now, we turn to theorem 1.1 whose proof depends on 
the following elementary lemma and some easy properties of compact 
operators.\, 

\bigskip

\subsection{The tale of two lemmas}
\label{2lemmas}

\begin{lemma}
\label{2.2}
 \it Let $ L^{2}_{{\Bbb R}}(\Omega ,\mu )$ as before, 
$ E\subset \Omega $ with $ 0<\, \mu E<\, \infty $, and $ (v_{k})_{k\geq 
1}$ a sequence in $ L^{2}_{{\Bbb R}}(\Omega ,\mu )$ such that\, 
$$
\sum _{k\geq 1}\vert v_{k}(x)\vert ^{2}\leq \, M^{2}\,\,\text{
for}\,\, x\in E\,.
$$
\, 
\it Then, (1) the map $ V:f\longmapsto \, ((f,v_{k}))_{k\geq 1}$ 
is compact as $ L^{2}(E,\mu )\longrightarrow \, l^{2}$, and (2) 
the map $ V^{*}:(c_{k})_{k\geq 1}\longmapsto \, \sum _{k\geq 1}c_{k}v_{k}\vert 
E$ is compact $ l^{2}\longrightarrow \, L^{2}(E,\mu )$ as well.\, 
\end{lemma}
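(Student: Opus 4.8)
The plan is to prove both statements simultaneously, since $V$ and $V^*$ are adjoint, and a bounded operator between Hilbert spaces is compact iff its adjoint is. So it suffices to show, say, that $V\colon L^2(E,\mu)\to\ell^2$ is compact. First I would note that $V$ is bounded: by Cauchy--Schwarz and the hypothesis $\sum_k|v_k(x)|^2\le M^2$ on $E$, one gets $\sum_k|(f,v_k)|^2\le \|f\|_{L^2(E)}^2\cdot M^2\,\mu(E)$ after integrating $|v_k(x)|^2$ and swapping sum and integral (Tonelli), actually more carefully $\sum_k|(f,v_k)|^2\le \int_E|f|^2\,d\mu\cdot\sup_x\sum_k|v_k(x)|^2$ is not quite it — the clean bound is $\sum_k|(f,v_k)|^2\le \big(\int_E|f|^2 d\mu\big)\big(\int_E \sum_k|v_k|^2 d\mu\big)\le M^2\mu(E)\|f\|^2$ — in any case $V$ is a bounded operator.

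The key step is compactness, and the natural approach is to approximate $V$ in operator norm by finite-rank operators $V_N f := ((f,v_1),\dots,(f,v_N),0,0,\dots)$. The difference $V-V_N$ corresponds to the tail family $(v_k)_{k> N}$, and its operator norm is controlled exactly as above: $\|(V-V_N)f\|_{\ell^2}^2=\sum_{k>N}|(f,v_k)|^2\le \big(\int_E|f|^2\,d\mu\big)\big(\int_E \sum_{k>N}|v_k(x)|^2\,d\mu(x)\big)$. So $\|V-V_N\|^2\le \int_E g_N\,d\mu$ where $g_N(x):=\sum_{k>N}|v_k(x)|^2$. Now $g_N(x)\downarrow 0$ pointwise on $E$ as $N\to\infty$ (the series converges since its sum is $\le M^2$), and $0\le g_N\le M^2$ with $M^2$ integrable over $E$ because $\mu(E)<\infty$; by dominated convergence $\int_E g_N\,d\mu\to 0$. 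Hence $\|V-V_N\|\to 0$, each $V_N$ has rank $\le N$, so $V$ is a norm-limit of finite-rank operators and therefore compact. Then (2) follows because $V^*$ is compact iff $V$ is, or one can repeat the argument verbatim with the adjoint finite-rank truncations.

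I do not expect any serious obstacle here; the statement is essentially a packaged form of the standard fact that a family with uniformly $\ell^2$-summable values and square-summable $L^2$-tail induces a compact analysis operator. The one point requiring a little care is the justification of interchanging summation and integration (use Tonelli, everything nonnegative) and the dominated convergence step, which needs $\mu(E)<\infty$ — this is why the hypothesis restricts attention to a finite-measure set $E$ and why the conclusion is phrased for $L^2(E,\mu)$ rather than $L^2(\Omega,\mu)$. Everything else is routine functional analysis.
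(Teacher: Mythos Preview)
Your proof is correct and follows essentially the same route as the paper: truncate $V$ to the finite-rank operator $V_N f=((f,v_1),\dots,(f,v_N),0,\dots)$, bound the tail operator by the Hilbert--Schmidt quantity $\sum_{k>N}\|v_k\|_{L^2(E)}^2=\int_E\sum_{k>N}|v_k|^2\,d\mu$, and observe this tends to $0$ (the paper just notes $\epsilon_1^2\le M^2\mu(E)<\infty$ so the tail of a convergent series vanishes; your dominated convergence argument is an equivalent justification). The paper phrases the tail estimate via the dual pairing $|(V'_N f,c)|\le\|f\|\,\|c\|\,(\sum_{k\ge N}\|v_k\|^2)^{1/2}$ rather than your direct bound on $\|V'_N f\|_{\ell^2}$, but this is purely cosmetic; part (2) is likewise dispatched by adjointness in both.
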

\begin{proof} \rm (1) Writing $ V=\, V_{N}+\, V'_{N}$, where\, 
$$
 V_{N}f=\, ((f,v_{1}),...,(f,v_{N}),0,0,...)\,,
$$
\rm we get for every $ c=\, (c_{k})_{k\geq 1}\in l^{2}$, $ f\in L^{2}(E)$ 
and $ N\geq 1$,\, 
$$
\vert (V'_{N}f,c)\vert =\, \vert \sum _{k\geq N}c_{k}(f,v_{k})\vert 
\leq \, \Vert f\Vert _{2}\sum _{k\geq N}\vert c_{k}\vert \cdot \Vert 
v_{k}\vert E\Vert _{2}\leq 
$$
$$
 \Vert f\Vert _{2}\Vert c\Vert _{2}(\sum 
_{k\geq N}\Vert v_{k}\vert E\Vert ^{2}_{2})^{1/2}=:\Vert f\Vert _{2}\Vert 
c\Vert _{2} \cdot \epsilon _{N}\,.
$$
\rm Hence $ \Vert V'_{N}\Vert \leq \, \epsilon _{N}$, where $ \epsilon 
_{N}\longrightarrow 0$ since $ \epsilon _{1}\leq \, M(\mu E)^{1/2}<\, 
\infty $\it . \rm The claim follows.\, 
\, \, (2) $ V^{*}$ is the adjoint of $ V$ of point (1). 
\end{proof}
\, 
\begin{lemma}
\label{2.3}
 \it Let $ (v_{k})_{k\geq 1}$ and $ E\subset \Omega $ 
be as in Lemma \ref{2.2} and $ (u_{k})_{k\geq 1}$ a frame in $ L^{2}_{{\Bbb R}}(\Omega 
,\mu )$. Then, the operators\, 
$$
 Uf=\, \sum _{k\geq 1}(f,v_{k})u_{k}\,\, \text{acting as }\,\,
 L^{2}(E,\mu )\longrightarrow L^{2}_{{\Bbb R}}(\Omega ,\mu )
$$
\it its adjoint $ U^{*}f=$ $ \sum _{k\geq 1}(f,u_{k})v_{k}$, and $ U^{'}$, given by
$$
U'f= \sum _{k\geq 1}(f,v_{k})v_{k}\vert E:\, L^{2}(E,\mu 
)\longrightarrow L^{2}_{{\Bbb R}}(E,\mu )
$$
\it are compact.
\end{lemma}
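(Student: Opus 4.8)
The plan is to deduce the compactness of all three operators from Lemma~\ref{2.2} by exhibiting each of them as a composition involving the compact maps $V$ and $V^{*}$ together with bounded operators coming from the frame/Bessel structure. Recall that since $(u_{k})_{k\geq 1}$ is a frame, the analysis operator $T\colon f\longmapsto ((f,u_{k}))_{k\geq 1}$ is bounded $L^{2}_{{\Bbb R}}(\Omega,\mu)\longrightarrow l^{2}$, and its adjoint $T^{*}\colon (c_{k})_{k\geq 1}\longmapsto \sum_{k\geq 1}c_{k}u_{k}$ is bounded $l^{2}\longrightarrow L^{2}_{{\Bbb R}}(\Omega,\mu)$ (this is exactly the dual ``Bessel'' bound $\|\sum_{k} c_{k}u_{k}\|^{2}\leq B\sum_{k}|c_{k}|^{2}$ recorded in the introduction). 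Likewise, by Lemma~\ref{2.2}, the operator $V\colon L^{2}(E,\mu)\longrightarrow l^{2}$, $Vf=((f,v_{k}))_{k\geq 1}$, and its adjoint $V^{*}\colon l^{2}\longrightarrow L^{2}(E,\mu)$, $V^{*}c=\sum_{k}c_{k}v_{k}\vert E$, are both compact.

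First I would handle $U$. Writing out the definition, $Uf=\sum_{k\geq 1}(f,v_{k})u_{k}=T^{*}\big(((f,v_{k}))_{k\geq 1}\big)=T^{*}Vf$, so $U=T^{*}V$ as a map $L^{2}(E,\mu)\longrightarrow L^{2}_{{\Bbb R}}(\Omega,\mu)$. A composition of a compact operator with a bounded one is compact, hence $U$ is compact. For its adjoint, one computes directly that $(Uf,g)_{L^{2}(\Omega)}=\sum_{k}(f,v_{k})(u_{k},g)=\big(f,\sum_{k}(g,u_{k})v_{k}\big)_{L^{2}(E)}$, so indeed $U^{*}g=\sum_{k\geq 1}(g,u_{k})v_{k}\vert E$, and $U^{*}$ is compact because the adjoint of a compact operator is compact (equivalently, $U^{*}=V^{*}T$). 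Finally, $U'f=\sum_{k\geq 1}(f,v_{k})v_{k}\vert E=V^{*}Vf$ as a map $L^{2}(E,\mu)\longrightarrow L^{2}(E,\mu)$, again a composition with a compact factor, hence compact; note $U'$ is in fact a positive compact operator since $U'=V^{*}V$.

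There is essentially no serious obstacle here: the content is entirely in Lemma~\ref{2.2}, and what remains is the bookkeeping of factoring each operator and invoking the two standard facts that compactness is preserved under composition with bounded operators and under taking adjoints. The one point requiring a little care is the convergence of the series defining $U$, $U^{*}$, $U'$: for $U$ and $U'$ this is immediate from $((f,v_{k}))_{k}\in l^{2}$ (which is the boundedness half of Lemma~\ref{2.2}) together with the respective Bessel bounds; for $U^{*}$ one uses $((g,u_{k}))_{k}\in l^{2}$ from the frame condition and $\sum_{k}\|v_{k}\vert E\|_{2}^{2}<\infty$, which legitimizes the interchange in the adjoint computation above. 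Once these convergences are noted, the factorizations $U=T^{*}V$, $U^{*}=V^{*}T$, $U'=V^{*}V$ finish the proof.
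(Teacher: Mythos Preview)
Your proof is correct and follows essentially the same approach as the paper: the paper's argument for $U$ is precisely the factorization $U=T^{*}V$ (stated as the inequality $\Vert Uf\Vert_{2}^{2}\le B\Vert Vf\Vert_{l^{2}}^{2}$), and for $U'$ the paper gives a direct pointwise tail estimate that amounts to the same thing as your factorization $U'=V^{*}V$. If anything, your explicit operator factorizations $U=T^{*}V$, $U^{*}=V^{*}T$, $U'=V^{*}V$ package the argument a bit more cleanly.
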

\, 
\begin{proof}\rm For $ U$, the frame definition entails $ \Vert Uf\Vert ^{2}_{2}\leq 
\, B\Vert Vf\Vert ^{2}_{l^{2}}$ for every $ f\in L^{2}(E,\mu )$, 
and the claim follows from Lemma \ref{2.2}.\, 
\, \, \, For the operator $ U'$, we repeat the estimate 
of Lemma \ref{2.2}:\, 
$$
 \vert \sum _{k\geq N}(f,v_{k})v_{k}\vert ^{2}\leq (\sum 
_{k\geq N}\vert (f,v_{k})\vert ^{2})(\sum _{k\geq N}\vert v_{k}\vert ^{2})\,,
$$
\rm which gives the result after integration over $ E$. 
 \end{proof} 
\, 
\subsection{Proof of the Theorem 1.1}
\label{2.4}
\begin{proof}
 \rm Suppose $ \sum _{k\geq 1}(u_{k}^{-}(x))^{2}<\, 
\infty $ on a set of positive measure. Then there exist $ E\subset \Omega 
$ and $ M>0$ such that\, 
$$
 \sum _{k\geq 1}(u_{k}^{-}(x))^{2}\leq \, M^{2}\quad \forall x\in E\,\, \text{and} \,\, 0<\, \mu E<\, \infty \,.
 $$
\, 
\rm This implies the same contradiction as in point 2.1 that the natural 
embedding $ L^{2}(E,\mu )\, \hookrightarrow \, L^{1}(E,\mu )$ 
is compact. The steps are as follows.

(1) Setting $ v_{k}=\, u^{-}_{k}$, we have from Lemma \ref{2.2}\, 
$$
 \Vert Vf\Vert _{l^{2}}^{2}= \sum _{k\geq 1}\vert (f,u^{-}_{k})\vert 
^{2}\leq \, \Vert V\Vert ^{2}\cdot \Vert f\Vert ^{2}\,\, \text{on}\,\, L^{2}(E,\mu 
)
$$
\rm and from the frame definition $ \sum _{k\geq 1}\vert (f,u_{k})\vert 
^{2}=\, (Sf,f)\leq \, B\Vert f\Vert ^{2}_{2}$ ($ \forall f\in 
L^{2}(\Omega ,\mu )$). Hence\, 
$$
\sum _{k\geq 1}\vert (f,u^{+}_{k})\vert ^{2} \leq 
 C^{2}\Vert f\Vert ^{2}\,\,\text{on}\,\,L^{2}(E,\mu )\,,\,\, C^{2}\leq \, 2(\Vert 
V\Vert ^{2}+B)\,.
$$

\rm (2) It follows from $ u^{+}_{k}=\, u_{k}+\, u^{-}_{k}$\it , 
\rm (1) and Lemma \ref{2.3} that $ W$,\, 
$$
 Wf:=\, \sum _{k\geq 1}(f,u^{+}_{k})u^{-}_{k}\vert E\,,
$$
\rm acting as $ L^{2}(E,\mu )\longrightarrow L^{2}_{{\Bbb R}}(E,\mu )$ 
is compact.\,

(3) Now, the quadratic form $ (Sf,f)=\, (Uf,f)+\, (Wf,f)+\, 
(Xf,f)$ on $ L^{2}(E,\mu )$, where\, 
$$
 Xf:=\, \sum _{k\geq 1}(f,u^{+}_{k})u^{+}_{k}
$$
\rm is equivalent to $ (f,f)=\, \Vert f\Vert ^{2}$ (in the sens 
$ A\Vert f\Vert ^{2}\leq \, (Sf,f)\leq \, B\Vert f\Vert ^{2}$), 
and the forms $ (Uf,f)$ and $ (Wf,f)$ are compact on $ L^{2}(E,\mu )$. 
It implies that $ (Xf,f)$ is equivalent to $ \Vert f\Vert ^{2}$ on 
a subspace $ H\subset \, L^{2}(E,\mu )$ of finite co-dimention.\, 

(4) The latter property means that the compression\, 
$$
X_{E}:L^{2}(E,\mu )\longrightarrow L^{2}(E,\mu ),\quad X_{E}f=\, Xf\vert E, \,\, f\in L^{2}(E,\mu 
$$
\rm is Fredholm. Let $ R:L^{2}(E,\mu )\longrightarrow L^{2}(E,\mu )$ 
be a regularizer of $ X_{E}$, a bounded operator such that\, 
$$
RX_{E}=\, id+\, K\,\, \text{where} \,\,K:L^{2}(E,\mu )\longrightarrow 
L^{2}(E,\mu )\,\, \text{is compact}\,.
$$

\rm (5) Show that $ X_{E}:L^{2}(E,\mu )\longrightarrow \, L^{1}(E,\mu 
)$ is compact. Indeed, similarly to Lemmas \ref{2.2} and \ref{2.3}, the norms $ 
\Vert X_{E,N}:L^{2}(E,\mu )\longrightarrow L^{1}(E,\mu )\Vert $ tends 
to zero as $ N\longrightarrow \infty $, where\, 
$X_{E,N}f=\, \sum _{k\geq N}(f,u^{+}_{k})u^{+}_{k}\vert 
E$. In fact,
\begin{align*} 
& \Vert \sum _{k\geq N}(f,u^{+}_{k})u^{+}_{k}\Vert _{L^{1}  (E,\mu 
)}\leq \, \sum _{k\geq N}\vert (f,u^{+}_{k})\vert \cdot \Vert u^{+}_{k}\Vert 
_{L^{1}  (E,\mu )}=
\\
&\sum _{k\geq N}\vert (f,u^{+}_{k})\vert \cdot 
(u^{+}_{k},1)_{L^{2}  (E,\mu )}\leq 
\\
& \leq \, (\sum _{k\geq N}\vert (f,u^{+}_{k})\vert ^{2})^{1/2}(\sum 
_{k\geq N}\vert (u^{+}_{k},1)_{L^{2}  (E,\mu )}\vert ^{2})^{1/2}\leq 
\\
& C\Vert f\Vert (\sum _{k\geq N}\vert (u^{+}_{k},1)_{L^{2}  (E,\mu 
)}\vert ^{2})^{1/2}:=\, C\Vert f\Vert \epsilon _{N}\,,
\end{align*}
\rm and $ \lim_{N}\epsilon _{N}=\, 0$ in view of (1) above. Now, 
regarding the identity $ RX_{E}=$ $ id+$ $ K$ as acting from $ L^{2}(E,\mu 
)$ to $ L^{1}(E,\mu )$, we get that the natural embedding $ id:L^{2}(E,\mu 
)$ $ \hookrightarrow $ $ L^{1}(E,\mu )$ is compact which contradicts 
to $ \mu E>0$. 
\end{proof} 
\, 
\subsection{Sign distributions for bases in more general spaces}
\label{2.5}
 \rm Here 
we give ``an abstract version" of the reasoning from 2.1-2.4 (without 
trying to find the most general setting). Let as before, $ (\Omega ,\mu 
)$ be a measure space with a continuous measure, and (WLOG) $ \mu \Omega 
<\, \infty $. $ X$ will be a real \it reflexive Banach lattice 
\rm of measurable functions such that\, 
$$
L^{\infty }\subset \, X\subset \, L^{1}\,\,\text{
and} \,\, X^{*}=\, \{h:\, hf\in L^{1},\, \forall f\in X\}
$$
\rm with the duality $ (f,h)=\, \int _{\Omega }f\,hd\mu $.\, 

\medskip

\noindent{\bf Example:} $ X=\, L^{p}_{\bR}(\Omega ,\mu )$\rm , 
$ 1<p<\infty $, or $ X$ is a (rearrangement invariant) symmetric space 
of measurable functions, see [KPS1978].\, 
\, \, Let $ U=\, (u_{k})_{k\geq 1}$ be a normalized unconditional 
basis in $ X$, $ U'=$ $ (u_{k}')_{k\geq 1}$ the dual basis, $ (u_{k},u'_{j})=\, 
\delta _{kj}$, so that\, 
$$
 f=\, \sum _{k\geq 1}(f,u'_{k})u_{k}\,\, \text{ for every }
\,\, f\in X\,.
$$
\rm Denote\, 
$$
 Coef(U)=\, \{c(f):=\, \{(f,u'_{k})\}:\, f\in 
X\}
$$
\rm the sequence space of coefficients (if needed we will add the space to the notation: $Coef(U, X)$); this is a sequence lattice, 
$ (a_{k})\in Coef(U)\, \Rightarrow \, (\lambda _{k}a_{k})\in 
Coef(U),\, \forall (\lambda _{k})\in l^{\infty }$, where the standard 
$ 0-1$ sequences form an unconditional basis. Clearly, $ Coef(U')=\, 
(Coef(U))^{*}$ (with respect to the duality $ (a,b)=\, \sum _{k\geq 
1}a_{k}b_{k}$).\, 
\, \, With this notation, here is our claim on the sign distributions.\, 
\, 
\begin{theorem}
\label{2.6}
 \it Let $ X$ be a reflexive Banach lattice of measurable 
functions satisfying the above conditions, and $ U=\, (u_{k})$ 
be a normalized unconditional basis in $ X$. Then, for every $ E\subset \, \Omega 
$, $ \mu E>\, 0$,\, 
$$
 \Big (\displaystyle \int _{E}u_{k}^{+}d\mu \Big )_{k\geq 
1}\not\in (Coef(U))^{*} \rm \text{and}\,\, \Big (\displaystyle \int _{E}u_{k}^{-}d\mu 
\Big )_{k\geq 1}\not\in (Coef(U))^{*}\,.
$$
\end{theorem}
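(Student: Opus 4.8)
The plan is to mimic the proof of Theorem~\ref{1.1} given in \S\ref{2.4}, replacing the Hilbert-space frame with a general unconditional basis in $X$ and the $l^2$ coefficient space with $Coef(U)$. Suppose for contradiction that $\big(\int_E u_k^-\,d\mu\big)_{k\ge1}\in (Coef(U))^*=Coef(U')$ for some $E$ with $\mu E>0$; equivalently, the numbers $(u_k^-\chi_E,1)=(u_k^-,\chi_E)$ are the coefficient sequence of some $g\in X^*$ against the dual basis $U'$, so that $\sum_k |(f,u_k') (u_k^-,\chi_E)|\le C\|f\|_X$ for all $f\in X$. First I would record the analogue of Lemma~\ref{2.2}: with $v_k:=u_k^-\chi_E$, the ``tail'' maps $f\mapsto \big((f,u_k')(u_k^-,\chi_E)\big)_{k\ge N}$ and the corresponding reconstruction maps have norm tending to $0$, because $\sum_{k\ge N}|(u_k^-,\chi_E)|\,|c_k|$ is controlled by the $Coef(U')$-norm of the tail of $(\int_E u_k^-)_k$; hence these operators are compact and the ``remainder'' operator $R_N f=\sum_{k\ge N}(f,u_k')u_k$, viewed into $L^1(E)$, has small norm.

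Next I would run the decomposition $u_k^+=u_k+u_k^-$ exactly as in steps (1)--(3) of \S\ref{2.4}. Writing the identity $f=\sum_k (f,u_k')u_k$ and compressing to $E$, one gets on $L^2(E,\mu)$ (or rather, one works with the bounded projection-type operators induced by the basis) three pieces: the ``$u_k^+\otimes u_k^+$'' piece $X_E f=\sum_k (f,u_k')u_k^+\big|E$, and two pieces built from $u_k$ or $u_k^-$ which are compact by the tale-of-two-lemmas argument and the assumed summability of $(\int_E u_k^-)$. Since the full operator $f\mapsto \sum_k(f,u_k')u_k|E$ differs from $id_{L^2(E)}$ only through the bounded invertible structure coming from the unconditional basis (on $L^2(E)$ the relevant operator is an isomorphism because $L^\infty\subset X\subset L^1$ forces $L^2(E)$ to sit between them in a compatible way), one concludes $X_E$ is Fredholm on $L^2(E,\mu)$; let $R$ be a regularizer, $RX_E=id+K$ with $K$ compact.

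The final step is the contradiction: as in step (5) of \S\ref{2.4}, estimate $\|X_{E,N}f\|_{L^1(E)}=\big\|\sum_{k\ge N}(f,u_k')u_k^+\big\|_{L^1(E)}\le \sum_{k\ge N}|(f,u_k')|\,(u_k^+,\chi_E)$, bound $(u_k^+,\chi_E)\le (u_k,\chi_E)^+ + (u_k^-,\chi_E)$ and use Hölder for the $Coef(U)$--$Coef(U')$ duality together with $(\int_E u_k^+)_k\in (Coef(U))^*$ (which follows from $(\int_E u_k^-)_k\in(Coef(U))^*$, the boundedness of $(\int_E u_k)_k$ in $(Coef(U))^*$, and $u_k^+=u_k+u_k^-$). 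This shows $\|X_{E,N}:L^2(E)\to L^1(E)\|\to 0$, so $X_E$ itself is compact $L^2(E)\to L^1(E)$; composing with the regularizer, the inclusion $L^2(E,\mu)\hookrightarrow L^1(E,\mu)$ is compact, contradicting $\mu E>0$. The symmetric statement for $u_k^+$ follows by the same argument with the roles of $+$ and $-$ interchanged.

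The main obstacle I anticipate is the passage through $L^2(E,\mu)$ rather than through $X$ itself: the Fredholm/regularizer machinery needs a Hilbert (or at least reflexive and nicely self-dual) space, so one must justify that restricting to $E$ and intersecting with $L^2(E)$ interacts correctly with the unconditional basis $U$ of $X$ — in particular that $X_E$ really is bounded and that the ``identity minus compact'' structure survives on $L^2(E,\mu)$. The hypotheses $L^\infty\subset X\subset L^1$, reflexivity, and the lattice property are exactly what make $Coef(U)$ a reasonable sequence lattice with $Coef(U')=(Coef(U))^*$, and these should be enough to push the argument through; verifying the boundedness of the relevant operators on $L^2(E,\mu)$ (as opposed to on $X$) is the one place where care is genuinely needed, and I would handle it by noting that on the finite-measure set $E$ one has the continuous inclusions and that the coefficient functionals $f\mapsto (f,u_k')$ restricted to $L^2(E)\cap X$ are controlled, so only finitely many of the operator pieces need separate attention while the tails are uniformly small.
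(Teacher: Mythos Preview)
Your proposal has a genuine gap, and it is precisely the one you flag at the end: the detour through $L^2(E,\mu)$ is not justified. In the general lattice $X$ we only know $L^\infty\subset X\subset L^1$; there is no reason for $L^2(E)$ to embed in $X$ (nor $X|E$ into $L^2(E)$), so the coefficient functionals $f\mapsto (f,u_k')$ are not defined on $L^2(E)$, the operator you call $X_E$ need not be bounded on $L^2(E)$, and the Fredholm/regularizer step has no home. Your suggested patch (``only finitely many pieces need separate attention while the tails are uniformly small'') does not rescue this, because the ``tails are small'' statement you can prove is $X\to L^1$, not $L^2(E)\to L^2(E)$, and Fredholmness requires the same space on both sides.

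The paper avoids all of this by a much shorter and more direct argument that stays entirely in $X$ and $L^1$. Writing $u_k=u_k^+-u_k^-$ gives the identity $id_X=V^+-V^-$ on the nose, where $V^\pm f=\sum_k(f,u_k')u_k^\pm$; there is no need for a frame/basis operator to be inverted or regularized. Under the contradiction hypothesis $\big(\int_E u_k^-\,d\mu\big)_k\in (Coef(U))^*$, the tail estimate
\[
\Big\|\sum_{k\ge N}(f,u_k')u_k^-\Big\|_{L^1(E)}\ \le\ \sum_{k\ge N}\big|(f,u_k')\big|\int_E u_k^-\,d\mu\ =\ \big(c(f_*),R_N^-u\big)
\]
(with $f_*=\sum_k|(f,u_k')|u_k$, $\|f_*\|_X\le C\|f\|_X$ by unconditionality) shows $V^-:X\to L^1(E)$ is compact. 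For $V^+$ one uses $u_k^+=u_k+u_k^-$ together with the automatic fact $\big(\int_E u_k\,d\mu\big)_k=\big((\chi_E,u_k)\big)_k\in Coef(U')=(Coef(U))^*$ (since $\chi_E\in L^\infty\subset X^*$). Hence $id:X\to L^1(E)$ is compact, contradicting $\mu E>0$. No $L^2$, no Fredholm theory; the decomposition $id=V^+-V^-$ replaces your steps (3)--(5) in one line.
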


\begin{proof} Here is the proof of Theorem \ref{2.6}. \rm The reasoning repeats our steps 
above. Namely, let\, 
$$
 V^{\pm }f=\, \sum _{k\geq 1}(f,u'_{k})u^{\pm }_{k},
\,\,\text{so that}\,\, id=\, V^{+}-\, V^{-}
$$
\rm and\, 
$$
V_{N}^{\pm }f= \sum _{k\geq N}(f,u'_{k})u^{\pm }_{k},\,\,
 f\in X,\,\,  N=\, 1,2,...
 $$

\rm Now, assuming $ R^{-}u:=\, \Big (\displaystyle \int _{E}u_{k}^{-}d\mu 
\Big )_{k\geq 1}\in (Coef(U))^{*}$ for some $ E$, $ \mu E>0$, we obtain\, 
$$
 \Vert V_{N}^{-}f\Vert _{1}\leq  \sum _{k\geq N}\vert 
(f,u'_{k})\vert \int _{E}u^{-}_{k}d\mu =\, (c(f_{*}),R_{N}^{-}u),
$$
\rm where $ f_{*}=\, \sum _{k\geq 1}\vert (f,u'_{k})\vert u_{k}$ 
(with $ \Vert f_{*}\Vert _{X}\leq \, C\Vert f\Vert _{X}$, unconditional 
basis) and $ R_{N}^{-}u=\, \{0,...0,\int _{E}u^{-}_{N+1}d\mu ,...\}$. 
Since $ (u'_{k})$ is a basis in $ X^{*}$ (reflexivity of $ X$), we 
get\, 
$$
 \Big \Vert V_{N}^{-}:X\longrightarrow L^{1}\Big \Vert \leq 
\, C\Big \Vert R_{N}^{-}u\Big \Vert _{X^{*\, }}\longrightarrow \, 
0\,\,\text{as}\,\,N\longrightarrow \infty \,.
$$
\, 
\rm The same for $ V_{N}^{+}$:\, 
$$
 \Vert V_{N}^{+}f\Vert _{1}\leq \sum _{k\geq N}\vert 
(f,u'_{k})\vert \int _{E}u^{+}_{k}d\mu = (c(f_{*}),R_{N}u-R_{N}^{-}u)\,,
$$
\rm where $ R_{N}u=$ $ \{0,...0,\int _{E}u_{N+1}d\mu ,...\}$, and hence\, 
\, 
 $$
  \Vert V_{N}^{+}:X\longrightarrow L^{1}\Vert \leq  C\Vert R_{N}u-R_{N}^{-}u\Vert _{X^{*}}\leq \, C(\Vert R_{N}u\Vert 
_{X^{*\, }}+\, \Vert R_{N}^{-}u\Vert _{X^{*}})\,.
$$ 
\rm But $ \displaystyle \int _{E}u_{k}d\mu =\, (\chi _{E},u_{k})$ 
and hence $ \Big (\displaystyle \int _{E}u_{k}d\mu \Big )_{k\geq 1}\in 
(Coef(U))^{*}$ ($ X\subset \, L^{1}$, and so $ L^{\infty }\subset \, 
X^{*}$). It implies $ \lim_{N}\Vert R_{N}u\Vert _{X^{*}}=\, 0$, 
and as above, we conclude that both $ V^{+}$, $ V^{-}:\, X\longrightarrow \, 
L^{1}$ are compact operators and $ id=\, V^{+}-\, V^{-}$. 
But there exists a unimodular sequence $ v_{n}$ in $ X$ which tends 
weakly to zero (it is clear when replacing $ (\Omega ,\mu )$ by isomorphic 
measure space $ ((0,1,dx)$), but $ \Vert v_{n}\Vert _{1}=\, \mu \Omega 
>0$. Contradiction. 
 \end{proof} 

\bigskip

\subsection{
Now we give an application of Theorem \ref{2.6}}
\label{appl}

\medskip

\bf (1) Type, cotype, and unconditional bases. \rm Recall (see for 
example [Woj1996], point III.A.17) that a Banach space $ X$ is said 
to \it have cotype $ q,\,\, 2\leq q\leq \infty $\rm , if for some 
constant $ C>0$ and for every finite sequence $ x=\,\, (x_{j})$, 
$ x_{j}\in X$,\ \par 
$$
 C\displaystyle \int _{0}^{1}\Big \Vert \displaystyle \sum _{j}r_{j}(t)x_{j}\Big \Vert 
dt\geq \,\, \Big \Vert x\Big \Vert _{l^{q}}:=\,\, \Big (\displaystyle \sum _{j}\Big \Vert 
x_{j}\Big \Vert ^{q}\Big )^{1/q}\,,
$$
and it \it has type $ q,\,\, 1\leq q\leq 2$\rm , if\ \par 
$$
 \displaystyle \int _{0}^{1}\Big \Vert \displaystyle \sum _{j}r_{j}(t)x_{j}\Big \Vert 
dt\leq \,\, C\Big \Vert x\Big \Vert _{l^{q}}\,,
$$
 where $ (r_{j})_{j\geq 1}$ stands for the sequence of Rademacher 
functions. It is known (and is proved in [Woj1996], Ch. III.A) that 
$ X$ has type $ q$ if and only if $ X^{*}$ has cotype $ q'$, $ {\frac{1}{q'}} 
+\,\, {\frac{1}{q}} =\,\, 1$, and \it if $ X$ has type $ q_{1}\leq 
2$ and a cotype $ q_{2}\geq 2$ and if $ U=\,\, (u_{k})$ is a normalized 
unconditional basis in $ X$ then\ \par 
\ \par 
  \centerline{ $ l^{q_{1}}\subset \,\, Coef(U,X)\subset \,\, l^{q_{2}}$\rm .}
\ \par 
\bf Corollary. \it If in condition of Theorem 2.3, the lattice $ X$ 
has a cotype $ q_{2}$ then\ \par 
\ \par 
  \centerline{ $ \Big (\displaystyle \int _{E}u_{k}^{\pm }d\mu \Big )_{k\geq 
1}\not\in l^{q_{2}  '}$ \rm ($ \forall E\subset $ $ \Omega ,$ $ \mu E>0$), 
whence $ \sum _{k\geq 1}(u_{k}^{\pm }(x))^{q_{2}  '}=\,\, \infty $ 
a.e. $ \Omega $.}

\medskip

\rm Indeed, $ l^{q'_{2}}\subset \,\, Coef(U,X)^{*}$, 
and the first claim follows from the theorem. Also 
$$ \Big (\displaystyle \int _{E}u_{k}^{\pm 
}d\mu \Big )^{q_{2}  '}\leq \,\, c\displaystyle \int _{E}(u_{k}^{\pm })^{q_{2}  '}d\mu \,,
$$
whence $ \displaystyle \int _{E}\displaystyle \sum _{k}(u_{k}^{\pm })^{q_{2}  '}d\mu 
=\,\, \infty $ for every $ E,\,\, \mu E>0$, which is equivalent 
to
$$
 \sum _{k\geq 1}(u_{k}^{\pm }(x))^{q_{2}  '}= \infty \quad
a.e. \,\, \Omega \,.
$$ 
\ \par 
\bf (2) The spaces $ X=$ $ L^{p}_{{\Bbb R}}(\Omega ,\mu )$. \rm It 
is known (and is basically equivalent to   Khintchin's inequality, see 
[Woj1996], point III.A.22) that $ L^{p}$ \it is of type $ q_{1}=\,\, min(2,p)$ 
and of cotype $ q_{2}=\,\, max(2,p)$\rm , and hence\ \par 
\ \par 
  \centerline{ $ Coef(U,L^{p})\subset $ $ l^{q}$, where $ q=\,\, max(2,p)$.}
\ \par 
\rm (It is curious to note how different is the coefficient space for 
the standard trigonometric \it Schauder basis \rm of $ L^{p}(0,2\pi $): 
the Hausdorff--Young inequality tells that $ Coef(e^{inx},L^{p})\subset $ 
$ l^{p'}$ for $ 1<p\leq 2$ and $ Coef(e^{inx},L^{p})\subset $ $ l^{2}$ 
for $ p\geq 2$).\ \par 
\ \par 
\bf Corollary. \it Let $ X=$ $ L^{p}_{{\Bbb R}}(\Omega ,\mu )$, $ 1<p<\infty 
$, and $ U=\,\, (u_{k})$ a normalized unconditional basis in $ X$. 
Then for every $ E\subset \,\, \Omega ,\,\, \mu E>0$, we have\ \par 
\ \par 
  \centerline{ for $ 1<p\leq 2$, $ \Big (\displaystyle \int _{E}u_{k}^{\pm 
}d\mu \Big )_{k\geq 1}\not\in l^{2}$, and in particular $ \sum _{k\geq 
1}u_{k}^{\pm }(x)^{2}=\infty $ a.e.,}
\ \par 
  \centerline{ \it for $ 2<p<\infty $, $ \Big (\displaystyle \int _{E}u_{k}^{\pm 
}d\mu \Big )_{k\geq 1}\not\in l^{p'}$, and in particular $ \sum _{k\geq 
1}u_{k}^{\pm }(x)^{p'}=\infty $ a.e.,}
\ \par 
\it where $ {\frac{1}{p'}} +\,\, {\frac{1}{p}} =\,\, 1$\rm . 
$   $\ \par 

\medskip

 The necessary condition $ (u_{k}^{\pm }(x))_{k\geq 1}\not\in l^{p'}$ 
a.e. for $ p\geq 2$, as well as a weaker condition $ (u_{k}^{\pm }(x))_{k\geq 
1}\not\in l^{1}$ a.e. for $ 1<p<2$, were found already in [Aru1966].


\section{Pointwise behavior of orthogonal polynomials, 
and proof of Theorem 1.3.}
\label{3}

\, \rm Here we show that the exponent 2 in Theorem 1.1 cannot 
be improved: for every $ \epsilon _{k}\searrow 0$ having $ \sum _{k}\epsilon 
_{k}^{2}=\, \infty $, there exists an orthonormal basis $ (u_{k})$ 
with $ \vert u_{k}(x)\vert \leq \, C(x)\epsilon _{k}$ a.e.; in 
particular, taking $ \epsilon _{k}=\, (k+1)^{-1}$, we get $ \sum _{k\geq 
1}\vert u_{k}(x)\vert ^{2+\epsilon }<\, \infty $ a.e ($ \forall \epsilon 
>0$). Theorem 1.3 is a simple restating of Theorem 3.1 below. The proof 
of Theorem 3.1 is based on the three terms recurrence for orthogonal 
polynomials but its direct application (replacing moduli of sums by 
sums of moduli) fails. Instead, we use a subtle reasoning introduced 
in a similar situation in important papers by A. M\'at\'e and P. Nevai 
[MaN1983] and R.Szwarc [Szw2003]. The basic facts of the theory of 
orthogonal polynomials are contained (for example) in the books [Sz1975], 
[Ber1968], [Sim2005]. One of them, the classical J. Favard theorem (1935), 
claims that whatever are real sequences $ b_{k}\in {\Bbb R}$ and $ a_{k}>0$ 
and the sequence of polynomials $ p_{k}$, $ deg(p_{k})=\, k$, 
$ k=0,1,...$ defined by the three term recurrence 
$$
 xp_{k}(x)=\, a_{k+1}p_{k+1}(x)+\, b_{k}p_{k}(x)+\, 
a_{k}p_{k-1}(x), \quad k=\, 0,1,2,...\,,
$$
\rm where $ p_{0}=\, 1$, $ p_{-1}(x)=\, 0$, there exists 
(at least one) Borel measure $ \mu \geq 0$ on the real line such that 
$ p_{k}\in L^{2}(\mu )$ ($ \forall k\geq 0$) and $ (p_{k},p_{j})_{L^{2}  (\mu 
)}=\, \delta _{k,j}$ (Kronecker delta). 

In fact, the measure $\mu$ is the scalar spectral measure of the associated tridiagonal (selfadjoint) Jacobi matrix $J$  having $(b_k)_{k\ge 0}$ on the main diagonal and $(a_k)_{k\ge 1}$ on two side diagonals.

Another classical theorem 
(T. Carleman) tell us that such a measure is unique if $ \sum _{k\geq 0}{\frac{1}{a_{k}}} 
=\, \infty $ (the so-called ``determined case") - the condition 
is obviously satisfied in case of Theorem 3.1 below. It follows that 
the polynomials are dense in $ L^{2}(\mu )$, and hence $ (p_{k})_{k\geq 
0}$ forms an orthonormal basis in $ L^{2}(\mu )$. A huge theory of orthogonal polynomials and the associated Jacobi matrices is (partially) presented
in books mentioned above.

\bigskip

We use here the work of R. Szwarc \cite{Szw2003}. We just repeat several calculations from this article to get the following result.
\begin{theorem}
\label{bn}
Let $\{b_n\}$ , $b_n>0$, $b_n\to \infty$, be a monotone sequences such that $b_n/b_{n-1} \to 1$, and  $\sum b_n^{-1} =\infty$ and let $a_n$ be such that
$a_n = \frac1{2B}  \sqrt{b_n b_{n-1}}$, where $0<B<1$. Then the Jacobi matrix with $\{b_n\}$ on the main diagonal and $\{a_n\}$ on two other diagonals will have 
absolutely continuous spectrum and  the orthogonal polynomials $\{p_n\}$ will have a local uniform  estimate
$$
|p_n(x)|^2 \le C b_n^{-1}\,.
$$
\end{theorem}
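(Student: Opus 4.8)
The plan is to follow the Máté--Nevai--Szwarc strategy for deriving pointwise bounds on orthonormal polynomials directly from the three-term recurrence. The key device is to introduce, for a fixed $x$ in the (open) interior of the a.c. spectrum, an auxiliary quadratic-type functional of two consecutive polynomial values that decays along the recurrence. Concretely, writing the recurrence in the normalized form, I would look for positive weights $\lambda_n = \lambda_n(x)$ and a ``energy'' $E_n(x) = p_n(x)^2 + \alpha_n(x)\, p_n(x) p_{n-1}(x) + \beta_n(x)\, p_{n-1}(x)^2$, chosen so that, using $a_n = \tfrac{1}{2B}\sqrt{b_n b_{n-1}}$ and the hypothesis $b_n/b_{n-1}\to 1$, one gets a monotonicity inequality of the shape $b_n E_{n}(x) \le b_{n-1} E_{n-1}(x) (1 + \varepsilon_n)$ with $\sum \varepsilon_n < \infty$ on compact subsets of the spectrum. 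Since $b_0 E_0(x)$ is finite (it is just a fixed expression in $x$, $b_0$, $a_1$), telescoping gives $b_n E_n(x) \le C(x)$ with $C$ locally bounded. Finally, positivity of the quadratic form $E_n$ (the discriminant condition $\alpha_n^2 < 4\beta_n$, which is exactly where $0<B<1$ enters and keeps us strictly inside the spectrum) yields $p_n(x)^2 \le \mathrm{const}\cdot E_n(x) \le C(x) b_n^{-1}$, which is the assertion.

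The steps, in order, would be: (1) Rescale the recurrence. Set $a_n = \tfrac1{2B}\sqrt{b_n b_{n-1}}$ and substitute $p_n(x) = q_n(x)/\sqrt{\text{(something like } b_n)}$ or, alternatively, keep $p_n$ and track the $b_n$-weights explicitly; the point is to reduce $x p_k = a_{k+1}p_{k+1} + b_k p_k + a_k p_{k-1}$ to a form in which the ``variable coefficient'' part is controlled by the slowly-varying ratios $b_k/b_{k-1} \to 1$ and by $B$. (2) Identify the correct $E_n$: following Szwarc, the natural choice makes $E_n$ essentially $b_n\big(p_n^2 - \tfrac{x}{b_n} p_n p_{n-1} + c\, p_{n-1}^2\big)$ up to normalization, and one checks by a direct (tedious but elementary) computation using the recurrence that $E_{n}$ is decreasing modulo a multiplicative error $1+\varepsilon_n$. (3) Verify $\sum_n \varepsilon_n < \infty$ locally uniformly: the $\varepsilon_n$ are built from $|b_n/b_{n-1}-1|$ and similar differences, and here one uses that monotone $b_n$ with $b_n/b_{n-1}\to1$ forces $\sum (b_n/b_{n-1}-1) < \infty$... wait, that is false in general, so more care is needed — one must exploit cancellation/telescoping in the sum of errors rather than crude summability; this is precisely the subtlety the authors flag. (4) Telescope to get $b_n E_n(x) \le C(x)$, with $C$ bounded on compacts of the a.c. spectrum. (5) Use the strict positive-definiteness of the quadratic form (coming from $B<1$) to bound $p_n(x)^2$ by $E_n(x)$ up to a constant depending on $x$ through the same compact set, giving $|p_n(x)|^2 \le C(x) b_n^{-1}$.

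I should also dispatch the preliminary facts cheaply: the Carleman condition $\sum a_n^{-1} = \infty$ holds because $a_n \asymp b_n$ and $\sum b_n^{-1} = \infty$, so the moment problem is determinate and $(p_n)$ is an orthonormal basis of $L^2(\mu)$; absolute continuity of the spectrum on the relevant interval will follow from the boundedness of $b_n E_n(x)$ (a bounded ``generalized eigenfunction'' growth rate plus subordinacy-type or Máté--Nevai-type arguments gives a.c.\ spectrum, or one can cite Szwarc directly). The weight $w$ in the statement of Theorem 1.3 is then $d\mu/dx$ on that interval, and $C(x)$ in Theorem 1.3 is the $C(x)$ obtained above, locally bounded.

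The main obstacle, as the authors themselves signal (``its direct application $\dots$ fails''), is step (3): controlling the accumulated multiplicative error $\prod_n (1+\varepsilon_n)$. A naive bound replaces $|a_{k+1}p_{k+1} + a_k p_{k-1}|$ by $a_{k+1}|p_{k+1}| + a_k|p_{k-1}|$ and loses the oscillation that makes the polynomials bounded; Szwarc's trick is to package the recurrence into the transfer-matrix action on the quadratic form $E_n$ and show the error terms are not merely summable but organized so their partial products stay bounded — effectively a discrete Gronwall argument where the ``bad'' terms, when summed with signs or after an Abel summation using monotonicity of $b_n$, are finite locally uniformly in $x$. Getting the discriminant inequality $\alpha_n^2 < 4\beta_n$ to hold uniformly (so that $E_n \gtrsim p_n^2$, not just $E_n \ge 0$) for $x$ in a fixed compact subinterval, again hinging on $B<1$, is the other place where one must be careful rather than merely compute. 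Everything else is bookkeeping with the recurrence.
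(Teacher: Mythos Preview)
Your overall strategy is the paper's: both follow M\'at\'e--Nevai--Szwarc, track a quadratic ``energy'' along the recurrence, and use $0<B<1$ for strict positive-definiteness. You even mention the right rescaling in step~(1), $p_n = q_n/\sqrt{(\text{something like }b_n)}$. The gap is that you abandon that rescaling when you reach the crucial step~(3), and then your error terms are of the wrong type.

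Concretely: without the substitution, your $\varepsilon_n$ is built from $|b_n/b_{n-1}-1|$, which (as you correctly note) need not be summable; and neither Abel summation nor unspecified ``cancellation'' will rescue a one-sided multiplicative inequality $b_nE_n\le b_{n-1}E_{n-1}(1+\varepsilon_n)$ with $\varepsilon_n\ge0$ and $\sum\varepsilon_n=\infty$. The paper's resolution is precisely to carry the substitution through. Setting $A_n(x)=p_n(x)\sqrt{b_n-x}$ turns the recurrence into $\Lambda_{n+1}A_{n+1}+BA_n+\Lambda_nA_{n-1}=0$ with
\[
\Lambda_n^{-2}=4\Big(1-\frac{x}{b_n}\Big)\Big(1-\frac{x}{b_{n-1}}\Big),
\]
and the invariant is the Tur\'an-type quantity $\Delta_n=A_n^2-A_{n-1}A_{n+1}$ (which, after eliminating $A_{n+1}$ via the recurrence, is exactly a positive-definite quadratic form in $(A_n,A_{n-1})$, i.e.\ your $E_n$ scaled by $b_n-x$). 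One then gets $|\Delta_{n+1}-\Delta_n|\le C\big(|\Lambda_{n+1}-\Lambda_n|+|\Lambda_{n+2}-\Lambda_{n+1}|\big)\Delta_n$, so everything reduces to bounded variation of $\Lambda_n$. That holds because $\Lambda_n^{-2}$ is a product of terms $1-x/b_k$, and $1/b_n$ has bounded variation automatically from monotonicity: $\sum_n|1/b_n-1/b_{n-1}|$ telescopes. This is the ``telescoping using monotonicity'' you were reaching for, but it only becomes available after the substitution converts the errors from ratio-type $|b_n/b_{n-1}-1|$ to difference-type $|1/b_n-1/b_{n-1}|$.

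Once $\Delta_n$ converges (locally uniformly) to a strictly positive continuous function, the inequality $A_n^2\le C\Delta_n$ gives $|p_n(x)|^2\le C(x)/(b_n-x)\le C'(x)/b_n$ with $C'$ locally bounded, which is the claim. Your handling of the Carleman condition and the role of $B<1$ is fine.
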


Here is the theorem from \cite{Szw2003}. 
\begin{theorem}
\label{RSthm} Assume the sequences $a_n$ and $b_n$ satisfy $a_n \to \infty$, $b_n\to \infty$, 
$b_n/b_{n-1}\to 1$  and $a_n^2/b_n b_{n-1} \to 1/ 4B^2>1/4$.
Let the sequences
$$
\frac{a_n^2}{b_n b_{n-1}}, \,\, \frac{(b_n+ b_{n-1})}{ a_n^2},\,\, \frac1{a_n^2}
$$
have bounded variation. Then the corresponding Jacobi matrix $J$ with $b_n$ on the main diagonal is essentially
self-adjoint if and only if   $\sum a_n^{-1} =\infty$. In that case the spectrum of $J$ coincides 
with the whole real line and the spectral measure is absolutely continuous.
\end{theorem}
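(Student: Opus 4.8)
The plan is to study the generalized eigenfunction equation
$$
a_{n+1}u_{n+1}+b_nu_n+a_nu_{n-1}=x\,u_n
$$
through its transfer matrices and to read off the exact asymptotics of solutions from the three bounded-variation hypotheses. First I would symmetrize: the one-step transfer matrix has determinant $a_n/a_{n+1}$, so rescaling $u_n\mapsto\tilde u_n$ by a factor of order $\sqrt{a_n}$ produces unimodular matrices $\Phi_n(x)$. The hypothesis $a_n^2/(b_nb_{n-1})\to 1/(4B^2)>1/4$ then places us, for every fixed $x$ and all large $n$, strictly inside the elliptic (propagating) regime: writing $\cos\theta_n=(x-b_n)/(2a_n)$, one has $\cos\theta_n\to -B\in(-1,0)$ because $x$ is negligible against $b_n$, so $\theta_n$ stays bounded away from $0$ and $\pi$. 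In these coordinates the three sequences $a_n^2/(b_nb_{n-1})$, $(b_n+b_{n-1})/a_n^2$ and $1/a_n^2$ are precisely the quantities that fix, respectively, the limiting rotation angle and the two perturbative off-diagonal entries of $\Phi_n(x)$, and their bounded variation is what makes the matrix product convergent.

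The engine of the argument, and the step I expect to be the main obstacle, is a discrete Levinson (Harris--Lutz) reduction. I would diagonalize the limiting constant-angle rotation, conjugate the $\Phi_n(x)$ into the form ``pure oscillation plus an error of bounded variation,'' and then, by summation by parts, show that the accumulated error is controlled once the three defining sequences have bounded variation; here the strict inequality $1/(4B^2)>1/4$ is essential, since it keeps $\theta_n$ away from the parabolic boundary and so prevents resonances from destroying the oscillatory cancellation. This is exactly the scheme of M\'at\'e--Nevai and of Szwarc \cite{Szw2003}, whose calculations I would follow. The output is that, uniformly for $x$ in compact subsets of $\bR$, every solution $\tilde u_n$ is bounded and bounded away from $0$, equivalently
$$
|u_n(x)|^2\asymp \frac1{a_n}\qquad(n\to\infty)
$$
with constants uniform on compacts.

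From this the essential self-adjointness alternative follows through Weyl's limit-point/limit-circle dichotomy: $J$ is essentially self-adjoint iff not both solutions of $Ju=xu$ are $\ell^2$ at one (hence every) non-real $x$. The direction $\sum a_n^{-1}=\infty\Rightarrow$ essential self-adjointness is Carleman's classical criterion and may simply be quoted. For the converse, suppose $\sum a_n^{-1}<\infty$; at a non-real $x$ the same analysis holds up to an amplitude modulation $e^{\pm c\sum_{k\le n}a_k^{-1}}$ produced by $\mathrm{Im}\,x$, which is now bounded above and below, so $|u_n(x)|^2\asymp a_n^{-1}$ for \emph{both} solutions and both lie in $\ell^2$. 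Thus $J$ is in the limit-circle case with deficiency indices $(1,1)$, and the equivalence is proved.

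Finally, assume $\sum a_n^{-1}=\infty$. For every real $x$ there is no amplitude modulation, so the two solutions both obey $|u_n|^2\asymp a_n^{-1}$ and neither is subordinate; by the Gilbert--Pearson subordinacy theory in its form for unbounded Jacobi matrices this forces the spectral measure to be purely absolutely continuous near $x$. Since the elliptic regime is reached for \emph{every} $x\in\bR$ --- the bands $[b_n-2a_n,\,b_n+2a_n]$ expand to cover the line because $2a_n\sim b_n/B>b_n$ --- the absolutely continuous spectrum is all of $\bR$. This is Theorem~\ref{RSthm}, and it specializes to Theorem~\ref{bn}: the choice $a_n=\tfrac1{2B}\sqrt{b_nb_{n-1}}$ makes $a_n^2/(b_nb_{n-1})$ literally constant, $\sum a_n^{-1}\asymp\sum b_n^{-1}=\infty$, and the bound $|p_n|^2\le Cb_n^{-1}$ a direct instance of $|u_n|^2\asymp a_n^{-1}\asymp b_n^{-1}$.
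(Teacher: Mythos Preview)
The paper does not prove Theorem~\ref{RSthm} at all: it is quoted verbatim from Szwarc \cite{Szw2003} and used as a black box to supply the essential self-adjointness and absolute continuity assertions of Theorem~\ref{bn}. The only argument the paper actually carries out in this section is the proof of the M\'at\'e--Nevai lemma (Theorem~\ref{MN}), which delivers the pointwise bound $|p_n(x)|^2\le C(x)/b_n$. So there is no ``paper's own proof'' of the statement you were asked to establish; your sketch is being compared against a citation.

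That said, your outline and the paper's reproduced calculation follow genuinely different routes to the polynomial bound, and the contrast is instructive. You work with the $2\times 2$ transfer matrices, rescale to unimodular form, diagonalize the limiting rotation, and invoke a discrete Levinson/Harris--Lutz argument plus Gilbert--Pearson subordinacy; this is the ``spectral-theoretic'' path and it yields the full package (self-adjointness alternative, a.c.\ spectrum, two-sided asymptotics $|u_n|^2\asymp a_n^{-1}$) in one stroke. The paper instead follows the elementary M\'at\'e--Nevai trick: set $A_n=p_n\sqrt{b_n-x}$, rewrite the recurrence as $\Lambda_{n+1}A_{n+1}+BA_n+\Lambda_nA_{n-1}=0$ with $\Lambda_n\to\tfrac12$, and study the scalar ``Casoratian'' $\Delta_n=A_n^2-A_{n-1}A_{n+1}$. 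Two algebraic identities (the paper's (3.7) and (3.9)) show $A_n^2,A_{n+1}^2\le C\Delta_n$, and a third (3.11) shows $|\Delta_{n+1}-\Delta_n|\le C(|\Lambda_{n+1}-\Lambda_n|+|\Lambda_{n+2}-\Lambda_{n+1}|)\Delta_n$, whence bounded variation of $\Lambda_n$ forces $\Delta_n$ to converge to a positive continuous limit and $A_n$ to stay bounded. This is shorter and entirely self-contained for the upper bound $|p_n|^2\le C/b_n$, but by itself gives neither the self-adjointness dichotomy nor absolute continuity---for those the paper simply defers to \cite{Szw2003}. Your approach is more systematic and proves more; the paper's is more hands-on for the single estimate it actually needs.

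One small caution on your sketch: the step ``at non-real $x$ the same analysis holds up to an amplitude modulation $e^{\pm c\sum_{k\le n}a_k^{-1}}$'' is morally right, but you have left the elliptic regime (now $(x-b_n)/(2a_n)$ has nonzero imaginary part), so the diagonalization is into genuinely hyperbolic pieces with eigenvalues $e^{\pm i\theta_n}$ off the unit circle; you should say explicitly that $|\,|e^{i\theta_n}|-1\,|\asymp |\mathrm{Im}\,x|\,a_n^{-1}$ to justify the claimed growth/decay rates and hence the $\ell^2$ membership of both solutions when $\sum a_n^{-1}<\infty$.
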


Theorem \ref{bn} follows from  this claim (except for the estimates of polynomials) immediately as the monotonicity of $\{ b_n\}$ ensures all the regularity required in Theorem \ref{RSthm}, and, of course, in assumptions of Theorem \ref{bn} $\sum b_n^{-1}=\infty$ gives $\sum a_n ^{-1}=\infty$.

Let us follow \cite{Szw2003} to show the estimate on orthogonal polynomials with respect to the spectral measure of $J$. There are several non essential typos in \cite{Szw2003}, and we will correct them on the way.

We have
\begin{equation}
\label{3terms}
xp_n(x)= a_{n+1} p_{n+1}(x) + b_n p_n(x) + a_{n} p_{n-1}(x)\,.
\end{equation}
We put 
\begin{equation}
\label{defAn}
A_n(x):= p_n(x) \sqrt{b_n-x},\quad n\ge N,\,\, \Lambda_n := B\frac{a_n}{\sqrt{(b_n-x)(b_{n-1}-x)}}\,.
\end{equation}

With this notation \eqref{3terms} becomes
\begin{equation}
\label{3t}
0= \Lambda_{n+1} A_{n+1}(x) + B A_n(x) + \Lambda_{n} A_{n-1}(x)\,.
\end{equation}

By assumptions, $\Lambda_n\to \frac12$ and $B<1$. Moreover,
since
$$
B^2\Lambda_n^{-2} = \frac{b_n b_{n-1}}{a_n^2}- \frac{(b_n+ b_{n-1})}{ a_n^2}x+  \frac1{a_n^2} x^2,
$$
it is of bounded variation, and thus so is $\Lambda_n$.

\begin{theorem} (Mat\'e, Nevai, \cite{MaN1983} ) 
\label{MN}
Let $\Lambda_n(x)$ be a positive valued sequence whose terms depend continuously on $x \in [a,b]$. Let $A_n(x)$ be a real valued sequence of continuous functions satisfying
\eqref{3t}
for $n \ge  N$. Assume the sequence $\Lambda_n(x)$ has bounded variation and $\Lambda_n(x)\to \frac12$ for
$x \in [a,b]$. Let  $|B| < 1$. Then there is a  strictly positive function $f(x)$ continuous on $[a,b]$
    such that
\begin{equation}
\label{det}
A_n^2(x) - A_{n-1}(x)A_{n+1}(x) \to f(x)
\end{equation}
uniformly for $x \in [a,b]$. Moreover, there is a constant $c$ such that
\begin{equation}
\label{bound}
|A_n(x)| \le c
\end{equation}
for $n\ge 0$ and $x \in [a,b]$.
\end{theorem}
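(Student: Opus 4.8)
The plan is to control the weighted Tur\'an determinant
$$
P_n(x):=\Lambda_{n+1}(x)\big(A_n^2(x)-A_{n-1}(x)A_{n+1}(x)\big),
$$
showing that it stays bounded in $n$, has bounded variation in $n$, and tends to a strictly positive limit, all uniformly in $x\in[a,b]$; the conclusions \eqref{det} and \eqref{bound} then follow at once. First I would eliminate $A_{n+1}$ via \eqref{3t} to obtain the closed form
$$
P_n=\Lambda_{n+1}A_n^2+B\,A_{n-1}A_n+\Lambda_n A_{n-1}^2 ,
$$
a quadratic form in $(A_n,A_{n-1})$ with diagonal entries $\Lambda_{n+1},\Lambda_n$ and off--diagonal entries $B/2$. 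Since $\Lambda_n\to\tfrac12$ and $|B|<1$, its matrix converges to the one with both diagonal entries $\tfrac12$ and off--diagonal $B/2$, whose eigenvalues $\tfrac12(1\pm|B|)$ lie strictly between $0$ and $1$. Hence there exist an index $N_0$ and constants $0<c_-\le c_+<\infty$ depending only on $B$ with
$$
c_-\big(A_n^2+A_{n-1}^2\big)\ \le\ P_n\ \le\ c_+\big(A_n^2+A_{n-1}^2\big),\qquad n\ge N_0 .
$$

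Next I would compute $P_{n+1}-P_n$, substituting \eqref{3t} once more to express $A_{n+1}$ through $A_n,A_{n-1}$ and collecting terms; this is the calculation borrowed from \cite{Szw2003}. The outcome is that $P_{n+1}-P_n$ is a linear combination of $A_n^2$, $A_{n-1}A_n$ and $A_{n-1}^2$ whose coefficients, for $n\ge N_0$, are bounded functions multiplied by the increments $\Lambda_{n+2}-\Lambda_{n+1}$ and $\Lambda_n-\Lambda_{n+1}$. Writing $\delta_n:=|\Lambda_{n+2}-\Lambda_{n+1}|+|\Lambda_{n+1}-\Lambda_n|$ and invoking the lower bound just obtained, this gives
$$
|P_{n+1}-P_n|\ \le\ C\,\delta_n\big(A_n^2+A_{n-1}^2\big)\ \le\ C'\,\delta_n\,P_n\qquad(n\ge N_0),
$$
with $C,C'$ depending only on $B$; moreover $\sum_n\delta_n\le 2\sum_n|\Lambda_{n+1}-\Lambda_n|<\infty$ by the bounded variation hypothesis (uniformly in $x$ in the situation we need).

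The one real obstacle is the apparent circularity: summing $|P_{n+1}-P_n|$ seems to require $A_n$ bounded, yet that boundedness is part of what is to be proved. The way around it is the Gronwall--type estimate already implicit above: from $P_{n+1}\le(1+C'\delta_n)P_n$ and $\sum_n\delta_n<\infty$, iterating from $N_0$,
$$
P_n\ \le\ P_{N_0}\prod_{k\ge N_0}(1+C'\delta_k)\ \le\ e^{C'\sum_k\delta_k}\sup_{[a,b]}P_{N_0}\ =:\ M<\infty\qquad(n\ge N_0),
$$
hence $A_n^2+A_{n-1}^2\le M/c_-$ for $n\ge N_0$; together with the continuity, hence boundedness on $[a,b]$, of the finitely many $A_0,\dots,A_{N_0}$, this is the uniform bound \eqref{bound}. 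Feeding this back, $\sum_n|P_{n+1}-P_n|\le C'M\sum_n\delta_n<\infty$ uniformly on $[a,b]$, so $P_n\to p$ uniformly for a continuous $p$, and therefore $A_n^2-A_{n-1}A_{n+1}=P_n/\Lambda_{n+1}\to 2p(x)=:f(x)$ uniformly, which is \eqref{det}.

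It remains to show $f>0$. For this I would run the companion lower estimate $P_{n+1}\ge(1-C'\delta_n)P_n$: after enlarging $N_0$ so that $C'\sum_{k\ge N_0}\delta_k\le\tfrac12$ (possible because the variation tails tend to $0$, uniformly in $x$ here), one gets $P_n\ge\tfrac12 P_{N_0}$ for $n\ge N_0$, whence $f(x)=2\lim_n P_n(x)\ge P_{N_0}(x)$ for every $x$. Finally $P_{N_0}(x)>0$ at each $x\in[a,b]$: the quadratic form $P_{N_0}$ is positive definite there, and $(A_{N_0-1}(x),A_{N_0}(x))\ne(0,0)$ because the recurrence \eqref{3t}, invertible in either direction since $\Lambda_n>0$, propagates the non--vanishing of $(A_{N-1}(x),A_N(x))$ — which holds in our application, where $p_{N-1}$ and $p_N$ are consecutive orthonormal polynomials and so share no zero. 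Being continuous and strictly positive on the compact $[a,b]$, $f$ then has $\inf_{[a,b]}f>0$. In short, the crux is the self--referential boundedness argument of the third step; everything else is bookkeeping.
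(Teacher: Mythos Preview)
Your argument is correct and is essentially the same as the paper's: both derive a quadratic--form representation of the Tur\'an determinant from \eqref{3t}, use $\Lambda_n\to\tfrac12$ and $|B|<1$ to get two--sided bounds $c_-(A_n^2+A_{n\pm1}^2)\le\Delta_n$, show $|\Delta_{n+1}-\Delta_n|\le C\delta_n\Delta_n$ with $\sum\delta_n<\infty$, and conclude by the same multiplicative Gronwall iteration. Your only variation is to work with the weighted quantity $P_n=\Lambda_{n+1}\Delta_n$, which yields the tidier symmetric form $P_n=\Lambda_{n+1}A_n^2+BA_{n-1}A_n+\Lambda_nA_{n-1}^2$ and lets you read off positive--definiteness from the limiting $2\times2$ matrix; and you make explicit the non--degeneracy input $(A_{N_0-1},A_{N_0})\ne(0,0)$ needed for $f>0$, which the paper leaves implicit.
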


Clearly to prove Theorem \ref{bn} it is sufficient to use  this result of Mat\'e, Nevai. Indeed, \eqref{bound} obviously gives us the bound on $p_n(x)^2$ stated in Theorem \ref{bn}.
For the reader’s convenience and for making the paper self-contained we give a proof to Theorem \ref{MN}.

\bigskip

\begin{proof} To prove Theorem \ref{MN}, one first uses recurrent relation \eqref{3t} to write
$$
A_{n-1} = -\frac{\Lambda_{n+1}}{\Lambda_{n} }A_{n+1}(x) -\frac{ B}{\Lambda_{n}} A_n(x) 
$$
and, hence,
\begin{equation}
\label{alg1}
A_n^2-A_{n-1}A_{n+1} = A_n^2 +\frac{\Lambda_{n+1}}{\Lambda_{n}} A_{n+1}^2 + \frac{B}{\Lambda_{n}} A_n A_{n+1}
\end{equation}
This can be rewritten as follows
\begin{equation}
\label{1}
A_n^2-A_{n-1}A_{n+1} = \Big( A_{n} + \frac{B}{2\Lambda_{n}} A_{n+1}\Big)^2 + \Big(\frac{\Lambda_{n+1}}{\Lambda_{n}}  - \frac{B^2}{4\Lambda_{n-1}^2} \Big) A_{n+1}^2
\end{equation}
Now we combine that equality with the facts that $\Lambda_n\to \frac12$ and $B<1$, and this combination   implies the following estimate:
\begin{equation}
\label{1a}
A_{n+1}^2 \le C (A_n^2-A_{n-1}A_{n+1})\,.
\end{equation}
But we can also rewrite the equality
\eqref{alg1}  in another form:
\begin{equation}
\label{2}
A_n^2-A_{n-1}A_{n+1} = \frac{\Lambda_{n+1}}{\Lambda_{n}} \Big( A_{n+1} + \frac{B}{2\Lambda_{n+1}} A_{n}\Big)^2 + \Big(1 - \frac{B^2}{4\Lambda_{n}\Lambda_{n+1}} \Big) A_{n}^2
\end{equation}

This formula and the same two facts that $\Lambda_n\to \frac12$ and $B<1$   imply now the following estimate:
\begin{equation}
\label{2a}
A_{n}^2 \le C (A_n^2-A_{n-1}A_{n+1})\,.
\end{equation}
Let us also write 
$$
A_{n+2} = - \frac{B}{\Lambda_{n+2}}A_{n+1} -\frac{\Lambda_{n+1}}{\Lambda_{n+2}} A_{n}
$$
That  equality  together with  \eqref{alg1}  give us the following:
\begin{align}
\label{alg2}
&(A_{n+1}^2  - A_nA_{n+2} ) - (A_n^2 -A_{n-1}A_{n+1} )= \Big(1-\frac{\Lambda_{n+1}}{\Lambda_{n}}  \Big) A_{n+1}^2 +\notag
\\ 
&B \Big(\frac1{\Lambda_{n+2}} -\frac1{\Lambda_{n}}\Big) A_n A_{n+1} + \big( \frac{\Lambda_{n+1}}{\Lambda_{n+2}} -1\Big) A_n^2\,.
\end{align}

Denoting $\Delta_n:= A_n^2- A_{n-1}A_{n+1}$ we get from \eqref{1a}, \eqref{2a} and \eqref{alg2}:
$$
\Delta_n >0,\,A_n^2 + A_{n+1}^2 \le C\Delta_n,\quad |\Delta_{n+1}-\Delta_n| \le C\big( |\Lambda_{n+1} -\Lambda_n|+ |\Lambda_{n+1} -\Lambda_{n+2}|\big)\Delta_n\,.
$$
Denote $\eps_n:=  |\Lambda_{n+1} -\Lambda_n|+ |\Lambda_{n+1} -\Lambda_{n+2}|$. Then
$$
(1-C\eps_n) \Delta_n \le \Delta_{n+1}\le (1+ C\eps_n) \Delta_n,
$$
and $\sum\eps_n$ converges by the assumption that $\Lambda_n$ has bounded variation.

Therefore, $\Delta_n$ uniformly converges to a strictly positive function $f$, and hence,
$A_n^2$ are uniformly bounded uniformly bounded for $n > N(x)$ (namely, by a multiple $Cf(x)$ of $f(x)$). Thus \eqref{bound} is proved, Theorem \ref{MN}  of Mat\'e--Nevai is proved, and we already said that this proves the bound of Theorem \ref{bn}.

\end{proof}

\section{Counterexamples: an attempt on Bessel systems, 
and proof of Theorem 1.2.}
\label{4}
\subsection{Part I of the Theorem 1.2.}
\label{4.1}
  \rm A \it natural question whether a ``half of the frame 
condition"\rm , namely the Bessel one, \it is sufficient \rm for getting 
the conclusion of theorem 1.1, is essentially equivalent (in the notation 
of Theorem 1.2) to the following: whether\, 
$$
(1)\&(2)  \Rightarrow  (3')\it :=  \exists f\in L^{2}_{{\Bbb R}}(\Omega 
,\mu ) \rm \,\,\text{such that}\,\,\sum _{n}\vert (f,v_{n})\vert ^{2}=\, \infty 
\, ?
$$
\, \, \rm Indeed, if (3') does not hold (and we have $ \sum _{n}\vert 
(f,v_{n})\vert ^{2}<\, \infty $, $ \forall f\in L^{2}_{{\Bbb R}}(\Omega 
,\mu )$), we automatically get property (3) of theorem 1.3 just due 
to Banach--Steinhaus theorem applied to the semi-norms\, 
$$
 p_{n}(f)=\, \Big (\displaystyle \sum _{k=1}^{n}(f,v_{k})^{2}\Big )^{1/2}\,.
 $$
\, 
\, \, \rm However, there is a counterexample which gives 
a negative answer to this question and proves Part I of the Theorem 1.2.\, 

\medskip

\subsection{Counterexample}
\label{counter}
 \rm Let $ (\Omega ,\mu )=\, (0,1),\, 
dx$, and $ (v_{k})_{k\geq 1}$ be any enumeration of the indicator functions 
$ \chi _{I}$ of dyadic subintervals $ {\mathcal D}=\, \{I=\, I_{j,n}\}\, 
$of $ (0,1)$:\, 
$$
 I_{j,n}=\, ({\frac{\displaystyle j}{\displaystyle 2^{n}}} 
,{\frac{\displaystyle j+1}{\displaystyle 2^{n}}} ),\, j=\, 0,...,\, 
2^{n}-1\,.
$$

\medskip

\, \rm Properties (1) and (2), as well as the completeness of 
$ (v_{k})$, are obvious. For (3), we write\, 
$$
 \displaystyle \sum _{k}\Big \vert (f,v_{k})\Big \vert ^{2}=\, 
\displaystyle \sum _{I\in {\mathcal D}}\Big ({\frac{\displaystyle 1}{\displaystyle \vert 
I\vert }} \displaystyle \int _{I}fdx\Big )^{2}\Big \vert I\Big \vert ^{2}\,,
$$
\rm and notice that the desired property (3) is the ``Carleson embedding"
$$
 \displaystyle \sum _{I\in {\mathcal D}}\Big ({\frac{\displaystyle 1}{\displaystyle 
\vert I\vert }} \displaystyle \int _{I}fdx\Big )^{2}w_{I}\leq \, B\Big \Vert 
f\Big \Vert _{2}^{2}\,,
$$
where $ w_{I}=\, \vert I\vert ^{2}$, $ I\in {\mathcal D}$. The 
necessary and sufficient condition for such an embedding is (see [NTV1999], 
[NT1996])\, 
$$
 \sup_{J\in {\mathcal D}}{\frac{\displaystyle 1}{\displaystyle \vert 
J\vert }} \displaystyle \sum _{I\subset J,I\in {\mathcal D}}w_{I}<\, \infty 
\,,
$$ 
\rm which is obviously fulfilled for $ w_{I}=$ $ \vert I\vert ^{2}$, 
$ I\in {\mathcal D}$.

\bigskip

\subsection{Part II of the Theorem 1.2}
\label{4.3}
 \rm Take $ \Omega =\, (0,2)$, and let $ (v_{n})_{n\geq 
1}$ be the sequence in $ L^{2}_{{\Bbb R}}((0,1),dx)$ constructed in 
Part I. Without loss of generality, we suppose that $ B<\, 1$. 
Then, the linear mapping $ T:l^{2}\longrightarrow L^{2}(0,1)$ defined 
by $ T\delta _{n}=\, v_{n},\, n\geq 1$ ($ \delta _{n}$ stands 
for the natural basis of $ l^{2}$) is a (strict) contraction. Let $ 
D_{T}=\, (I-T^{*}T)^{1/2}:l^{2}\longrightarrow l^{2}$ its defect 
operator, and $ V:l^{2}\longrightarrow L^{2}(1,2)$ an arbitrary isometric 
operator. We naturally consider $ L^{2}(0,2)$ as an orthogonal sum 
$ L^{2}(0,2)=\, L^{2}(0,1)\oplus L^{2}(1,2)$ and set $ Ux=\, Tx\oplus 
VD_{T}x$ for $ x\in l^{2}$. Then, $ U$ is isometric, $ \Vert Ux\Vert ^{2}=\, 
\Vert Tx\Vert ^{2}+\, \Vert D_{T}x\Vert ^{2}=\, \Vert x\Vert ^{2}$, 
and hence $ u_{2n}:=\, U\delta _{n}$, $ n=1,2,...$ is an orthonormal 
basis in $ F:=\, Ul^{2}\subset \, L^{2}(0,2)$. Choosing an 
arbitrary orthonormal basis $ (u_{2n+1})_{n\geq 1}$ in the orthogonal 
complement $ F^{\perp }$, we obtain an orthonormal basis $ (u_{k})_{k\geq 
1}$ in $ L^{2}(0,2)$ satisfying all requirements of the theorem (with 
$ E=\, (0,1)$).

\subsection{A lapse of equidistribution between $ u_{k}^{\pm }(x)$}
\label{4.4}
\begin{proof}
 \rm One 
can reordering the basis from 4.2 in order to get the following: \it there 
exists an orthonormal basis $ (U_{k})$ in $ L_{\bR}^{2}(0,2)$ such that\, 
$$
 \displaystyle \sum _{k=1}^{n}(U_{k}^{-}(x))^{2}=\, o\Big (\displaystyle \sum 
_{k=1}^{n}(U_{k}^{+}(x))^{2}\Big )\,\,\text{as}\,\, n\longrightarrow \infty \,\,
 x\in (0,1)\,.
 $$
Indeed, it suffices to set\, 
$$
(U_{k}):\, u_{2}, u_{4},...,u_{2N_{1}},\,\, u_{1},\,\,
u_{2N_{1}  +2}, ...,\, u_{2N_{2}}{\rm ,}u_{3},...
$$ 
where the integers $ N_{1}<\, N_{2}<...$ increase sufficiently 
fast. 
\end{proof}
\, 
\subsection{A minimal sequence can be positive}
\label{4.5}
 \rm Let $ u_{k}(x)=\, {\frac{1}{1+\sqrt{2} 
}} (1+\, Cos(\pi kx))$, $ k=\, 1,2,...$ in $ L_{\bR}^{2}(0,1)$. 
Then $ (u_{k})$ spans $ L_{\bR}^{2}(0,1)$, is normalized and uniformly 
minimal (with the dual $ u'_{k}=\, (2+\sqrt{2} )Cos(\pi kx)$), 
and $ u_{k}(x)\geq 0$. In fact, the Fourier series with respect to 
$ (u_{k})$ of a function $ f\in L_{\bR}^{2}(0,1)$, $ \sum _{k\geq 1}(f,u'_{k})u_{k}$, 
converges to $ f$, if $ f$ is (for example) Dini continuous at $ x=0$ 
and $ f(0)=\, 0$. However, $ (u_{k})$ is not a basis.

The question of the existence of non-negative Schauder basis in $L^p, p>1$ is open to the best of our knowledge. Detailed discussion can be found in \cite{PS2016}. For $p=1$, as it is already mentioned, non-negative Schauder basis exists, see \cite{JS2015}.

\end{document}